\titleformat{\section}[block]{\normalfont\bfseries\filcenter}{\itshape\thesection}{1em}{}
\titleformat{\subsection}[block]{\normalfont\bfseries}{\itshape\thesubsection}{0.9em}{}
\titleformat{\subsubsection}[block]{\normalfont\bfseries}{\itshape\thesubsubsection}{0.8em}{}
\titleformat{\caption}[block]{\normalfont}{\itshape}{0.8em}{}
\newtheorem{theorem}{Theorem}
\newtheorem{definition}{Definition}
\newtheorem{corollary}{Corollary}
\newtheorem{lemma}{Lemma}
\newtheorem{remark}{Remark}
\newcommand{\C}{{\rm Cov}}
\newcommand{\E}{\mathbb{E}}
\newcommand{\V}{\mathbb{V}}
\newcommand{\R}{\mathbb{R}}
\newcommand{\p}{\partial}
\newcommand{\z}{\mathbf{y}}
\newcommand{\Z}{\mathbf{Y}}
\newcommand{\y}{\mathbf{y}}
\newcommand{\Y}{\mathbf{Y}}
\newcommand{\Var}{{\rm Var}}
\begin{document}
\title[Parametric inference]{ Parametric inference of hidden discrete-time diffusion processes by deconvolution}

\author{Salima El Kolei}
\address{CREST, Ecole Nationale de la Statistique et de l'Analyse de l'Information (Ensai), Campus de Ker-Lann, rue Blaise Pascal, BP 37203, 35172 Bruz cedex, France. }
\email[Salima El Kolei]{salima.el\_kolei@ensai.fr}

\author{Florian Pelgrin}
\address{Ecole des Hautes Etudes Commerciales (EDHEC Business School), 24, avenue Gustave Delory CS 50411, 59057 Roubaix Cedex 1-France.}
\email[Florian Pelgrin]{florian.pelgrin@edhec.edu}

\keywords {} 

\begin{abstract}
We study a parametric approach for hidden discrete-time diffusion models based on contrast minimization and deconvolution. This approach leads to estimate a large class of stochastic models with nonlinear drift and nonlinear diffusion. It can be applied, for example, for ecological and financial state space models. \\
After proving consistency and asymptotic normality of the estimator, leading to asymptotic confidence intervals, we provide a thorough numerical study, which compares many classical methods used in practice (Monte Carlo Expectation Maximization Likelihood estimator and Bayesian estimators) to estimate stochastic volatility models. We prove that our estimator clearly outperforms the Maximum Likelihood Estimator in term of computing time, but also most of the other methods. We also show that this contrast method is the most stable and also does not need any tuning parameter.
\end{abstract}

\maketitle

\section{Introduction}
This paper is motivated by the parametric estimation of hidden stochastic models of the form:
\begin{equation}\label{mod1}
\left\lbrace\begin{array}{ll}
Y_{i}=X_{i}+\varepsilon_{i}\\
X_{i+1}=b_{\theta_0}(X_{i})+\sigma_{\theta_0}(X_i)\eta_{i+1},
\end{array}
\right.
\end{equation}
where one observes $Y_1$,$\cdots$,$Y_n$, and where the random variables
$\varepsilon_i$, $\eta_i$ and $X_i$ are unobserved. Notably
$(X_i)_{i \geq 0}$ is a strictly stationary, ergodic process that
depends on two measurable functions $b_{\theta_0}$
and $\sigma_{\theta_0}$ and its stationary density is $f_{\theta_0}$, where $\theta_0$ belongs to $\Theta \subset \R^p$.
The functions $b_{\theta_0}$, $\sigma_{\theta_0}$ and $f_{\theta_0}$ are known up to a finite
dimensional parameter, $\theta_0$, and the dependence with respect
to $\theta_0$ is not required to be the same in $b_{\theta_0}$ and
$\sigma_{\theta_0}$. Finally, the innovations $(\eta_i)_{i \geq
0}$ and the errors $(\varepsilon_i)_{i \geq 0}$ are independent
and identically distributed (i.i.d.) random
variables, the distribution of the innovations being known for identifiability of the model.

In this work, we propose to estimate the parameters of the two functions $b_{\theta_0}$ and
$\sigma_{\theta_0}$ driving the dynamics of  the hidden variables $(X_{i})_{i\geq 0}$. The
principle of the estimation method goes as follows. Taking that
the stationary density, $f_{\theta_0}$, is known up to the finite
dimensional parameter $\theta_0$, our M-estimator consists in
optimizing a contrast function that exploits a Fourier
deconvolution strategy in a parametric framework. In so doing, we
exploit a "Nadaraya-Watson approach" in the sense that we estimate
$b_{\theta_0}$ (respectively, $b^2_{\theta_0} + \sigma^2_{\theta_0}$) as
ratio of an estimator of $l_{\theta_0} = b_{\theta_0}f_{\theta_0}$
(respectively, $l_{\theta_0} = (b^2_{\theta_0} +
\sigma^2_{\theta_0})f_{\theta_0}$) and an estimator of $f_{\theta_0}$.
Notably we provide an analytical expression of the contrast
function for a well-known example and characterize their main properties. Moreover we show
that this deconvolution-based estimator is consistent and
asymptotically normally distributed for $\alpha$-mixing processes which leads to obtain confidence intervals in practice for many processes.
Finally, our Monte Carlo simulations show that our approach gives good results and is fast computing. All the results are illustrated on the famous stochastic volatility model with discrete time version of CIR (Cox Ingersoll Ross, see \cite{CIR85}) process for the volatility and are compared with many others methods used in the literature to estimate this model (Monte Carlo Expectation Maximisation, Sequential Monte Carlo).

Our approach extends the previous work \cite{MR3118606} where parametric estimation of models of type of \eqref{mod1} is handled for constant volatility function ($\forall x,\ \sigma_{\theta_0}(x) =\sigma_{\theta_0}$) and where the estimator proposed by the author is not adapted for stochastic process with nonlinear diffusion. As in the previous work \cite{MR3118606}, this approach is the extension to the parametric framework of the work \cite{CfLcRy10} where the authors propose a non-parametric estimation procedure in the case of discrete time stochastic models of the form of \eqref{mod1}.\footnote{See also Comte et Taupin in \cite{MR2397388}.} Our aim consists in showing how their procedure can be extended to a parametric framework and further by obtaining confidence intervals which are useful in practice.

\textbf{Applications.} This class of parametric models includes, among others, the
autoregressive model with measurement errors, the autoregressive
stochastic volatility model (\cite{Ta82}), the discrete
time versions of well-known diffusion processes in finance
(\cite{hull}, \cite{He93}) and some families of stochastic processes: Vasicek, CIR, modified CIR and hyperbolic processes (see \cite{CIR85} and \cite{GvJtLc99}). 

Here, we focus on a stochastic volatility model of the form:
\begin{eqnarray}\label{eq:appli1}
\left\lbrace\begin{array}{ll}
R_{i+1}=\exp\left(\frac{X_{i+1}}{2}\right)\xi_{i+1},\\
X_{i+1}=X_i+ \kappa (\mu-X_i)\Delta+\sigma \sqrt{\Delta X_i}\eta_{i+1}
\end{array}
\right.
\end{eqnarray} 
where $\xi_i$ and $\eta_i$ are centered gaussian random variables and $\Delta$ the sampling interval. Hence, the unobserved variance process $X_i$ is driven by a mean reverting stochastic process which was introduced in \cite{CIR85} to model the short term interest rates. 

By applying a log-transformation $Y_{i+1}=\log(R^{2}_{i+1})-\E[\log(\xi^{2}_{i+1})]$ and $\varepsilon_{i+1}=\log(\xi^{2}_{i+1})-\E[\log(\xi^{2}_{i+1})]$, the SV model  is a particular version of (\ref{mod1}) since it can be written as 
\begin{eqnarray}\label{eq:appli2}
\left\lbrace\begin{array}{ll}
Y_i=X_i+\varepsilon_{i}\\
X_{i+1}=X_i+ \kappa (\mu-X_i)\Delta+\sigma \sqrt{\Delta X_i}\eta_{i+1}
\end{array}
\right.
\end{eqnarray} 
where $\varepsilon_i$ follows a log chi-squared distribution.

From a practical point of view,  the observed component $(Y_i)_{i \geq 1}$ stands for the log-return of an asset price while the unobserved component stands for the volatility of this asset. The parameter $\kappa$ is the positive mean reverting parameter, $\mu$ is the positive long run parameter and $\sigma$ the positive volatility of the stochastic volatility process $(X_i)_{i \geq 1}$.\\

\textbf{Organization of this paper.} The paper is organized as follows. Section \ref{assum} presents the notations and the model assumptions. Section \ref{deconv} defines the deconvolution-based M-estimator and states all of the theoretical properties. Some Monte Carlo simulations are discussed in Section \ref{resultats} and some concluding remarks are provided in the last section. All the proofs can be found in Appendix \ref{A}.

\section{General setting and assumptions}\label{assum}

\noindent In this section, we introduce some preliminary main
notations and provide the assumptions of the model (\ref{mod1}).

\subsection{Notations}

Subsequently, for any function $v:\R\to \R$, we denote by $v^{*}$ the Fourier transform of the function $v$: $v^{*}(t)=\int_{}^{}e^{itx}v(x)dx$, by $||v||_{2}$ its $\mathbb{L}_2(\R)$-norm,  $||v||_{\infty}$ its supremum norm, $\langle \cdot, \cdot \rangle$ stands for the scalar product in $\mathbb{L}_2(\R)$ and ``$\star$'' for the usual convolution product. Moreover, for any integrable and square-integrable functions $u$, $u_1$, and $u_2$: we have $ (u^{*})^{*}(x)=2\pi u(-x)$and $ \left\langle u_1,u_2\right\rangle=\frac{1}{2\pi}\left\langle u^{*}_1,u^{*}_2\right\rangle$. Finally, $\left\|A\right\|$ denotes the Euclidean norm of a matrix
$A$, $\Y_{i}=(Y_{i},Y_{i+1})$ and $\y_{i}=(y_{i},y_{i+1})$,
$\mathbf{P}_n$ (respectively, $\mathbf{P}$) the empirical
(respectively, theoretical) expectation, that is, for any
stochastic variable: $\mathbf{P}_{n}(X) =
\frac{1}{n}\sum_{i=1}^{n} X_i$ (respectively,
$\mathbf{P}(X)=\E[X]$). Regarding the partial derivatives, for any
function $h_{\theta}$, $\nabla_{\theta}h_{\theta}$ is the vector of the partial
derivatives of $h_{\theta}$ with respect to (w.r.t) $\theta$ and
$\nabla^{2}_{\theta}h_{\theta}$ is the Hessian matrix of
$h_{\theta}$ w.r.t $\theta$.

\subsection{Assumptions}

We consider the hidden discrete-time diffusion model (\ref{mod1}).
The assumptions are the following.

\begin{itemize}
\item[\textbf{A0}] $\theta_0$ belongs to the interior $\Theta_0$
of a compact set $\Theta$, $\theta_0 \in \Theta \subset
\mathbb{R}^p$.

\item[\textbf{A1}] The errors $(\varepsilon_i)_{i \geq 0}$ are
independent and identically distributed centered random variables
with finit variance, $\E\left[\varepsilon_1^2\right] =
s^2_{\epsilon}$. The density of $\varepsilon_1$,
$f_{\varepsilon}$, belongs to $\mathbb{L}_2(\mathbb{R})$, and for
all $x \in \mathbb{R}$, $f^{*}_{\varepsilon}(x) \neq  0$.

\item[\textbf{A2}] The innovations $(\eta_i)_{i \geq 0}$ are
independent and identically distributed centered random variables
with unit variance $\E\left[\eta_1^2\right] = 1$ and $\E\left[\eta_1^3\right] = 0$. 

\item[\textbf{A3}] The $X_i$'s are strictly stationary and ergodic
with invariant density $f_{\theta_0}$.

\item[\textbf{A4}] The sequences $(X_i)_{i \geq 0}$ and
$(\varepsilon_i)_{i \geq 0}$ are independent. The sequence
$(\varepsilon_i)_{i \geq 0}$ and $(\eta_i)_{i \geq 0}$ are
independent.

\item[\textbf{A5}] On $\Theta_0$, the functions $\theta \mapsto
b_{\theta}$ and $\theta \mapsto \sigma_{\theta}$ admit continuous
derivatives with respect to $\theta$ up to order 2.

\item[\textbf{A6}] The function to estimate $l_{\theta} :=
\left(b_{\theta}^2 + \sigma^2_{\theta}\right)f_{\theta}$ belongs
to $\mathbb{L}_1(\mathbb{R})\cap \mathbb{L}_2(\mathbb{R})$, is
twice continuously differentiable w.r.t $\theta \in \Theta$ for
any $x$ and measurable w.r.t $x$ for all $\theta$ in $\Theta$.
Each element of $\nabla_{\theta}l_{\theta}$ and
$\nabla^{2}_{\theta}l_{\theta}$ belongs to
$\mathbb{L}_1(\mathbb{R})\cap \mathbb{L}_2(\mathbb{R})$.

\item[\textbf{A7}] The application $\theta \mapsto \mathbf{P}m_{\theta}$ admits a unique minimum and its Hessian matrix, denoted by $V_{\theta}$, is non-singular in $\theta_0$.\\
\end{itemize}

\noindent The compactness assumption \textbf{A0} might be relaxed by
assuming that $\theta_0$ is an element of the interior of a convex
parameter space $\Theta \in \mathbb{R}^{p}$. In this case, the
statistical properties of the M-estimator can be proved in the
light of convex optimization arguments. Assumptions \textbf{A1}-\textbf{A3} are
quite standard when considering estimation for convolution
models. On the other hand, Assumption $\textbf{A3}$ implies that if
$(X_i)_{i \geq 0}$ is an ergodic process then $(Y_i)_{i \geq 0}$
is stationary and ergodic since it is the sum of an ergodic
process and an i.i.d. noise process (\cite{Do94}). Consequently
$\Y_{i}=(Y_{i},Y_{i+1})$ inherits the ergodicity property.
According to Assumption \textbf{A4} the unknown density $g_{\theta_0}$ of the
$Y_i$'s is defined to be $f_{\theta_0}\star f_{\varepsilon}$. It turns
out that $g^{*}_{\theta_0} = f^{*}_{\theta_0}f^{*}_{\varepsilon}$ and
thus $f^{*}_{\theta_0} = g^{*}_{\theta_0}/f^{*}_{\varepsilon}$. Assumption $\textbf{A5}$
ensures some smoothness for the drift and diffusion functions. Assumption \textbf{A6} is also quite usual in the literature and serves for the construction and for asymptotic properties of our estimator.

\section{Parametric deconvolution estimator}\label{deconv}

\subsection{The contrast function}\label{contr}

\begin{definition}[Theoretical and empirical contrast functions] \label{contrast_function}
For any square integrable real function $v$, we set 
$$u_{v}(x)=\frac{1}{2\pi}\frac{v^{*}(-x)}{f_{\varepsilon}^{*}(x)},$$
where we recall that $f_{\varepsilon}^{*}$ is the Fourier transform of the density of the observation noise.  Let $\varphi : \R \to \R$ be given by
\begin{enumerate}
\item $\varphi:  x \mapsto x$, if $\sigma_{\theta_0}$ is a constant function of the hidden variable \label{varphicas}
\item $\varphi: x  \mapsto x^2-s_{\epsilon}^2$, if $\sigma_{\theta_0}$ is not a constant function of the hidden variable \label{varphicas2}
\end{enumerate}
where $s_{\epsilon}^2=\E[\varepsilon^2_1]$, and let us define the mapping $m$ as
\begin{eqnarray*}
m_{\cdot}(\cdot): (\theta, \z_{i})\in (\Theta \times
\mathbb{R}^{2})\mapsto
m_{\theta}(\z_i)=||l_{\theta}||^{2}-2\varphi\left(y_{i+1}\right)u^{*}_{l_{\theta}}(y_i).
\end{eqnarray*}

Then, under Assumptions \textbf{A1} up to \textbf{A7}, the
contrast function is defined by:
\begin{equation}\label{cont}
\E\left[m_{\theta}(\Y_1)\right]:=\left\|l_{\theta}\right\|^{2}-2\E\left[\varphi(Y_{2})u_{l_{\theta}}^{*}(Y_{1})\right],
\end{equation}
and its empirical couterparts is given by
\begin{equation}\label{contraste}
\mathbf{P}_{n}m_{\theta}=\frac{1}{n}\sum_{i=1}^{n}m_{\theta}(\Z_{i}).
\end{equation}
\end{definition}

\begin{remark}\label{remark:varphi}
As said in the introduction, the case where the diffusion function $\sigma_{\theta_0}$ is a constant function of the hidden variable has already been studied in \cite{MR3118606}. Therefore, from now on, we focus on the case \ref{varphicas2} in Definition \ref{contrast_function} and we refer to the aforementioned paper for the case \ref{varphicas} in Definition \ref{contrast_function}.
\end{remark}

\begin{definition}[Minimal contrast estimator]\label{definition_M_estimator}
Suppose that Assumptions \textbf{A0}-\textbf{A7} hold true then, the minimum-contrast estimator $\widehat{\theta}_n$ is defined as any solution of
\begin{equation}\label{min}
\widehat{\theta}_n=\arg\min_{\theta \in \Theta}\mathbf{P}_{n}m_{\theta}.
\end{equation}
\end{definition}

The existence of our estimator can be deduce from regularity properties of the function $l_{\theta}$ and compactness argument of the parameter space. See Appendix \ref{EoE}.\\

\begin{remark}\label{rem:smoothnessof fouriertransform}
In this paper we consider the situation in which the observation noise variance is known. This assumption, which is often not satisfied in practice, is necessary for the identifiability of the model and so is a standard assumption for state-space models given in (\ref{mod1}).\\
There is some restrictions on the distribution of the innovations in the Nadaraya-Watson approach. It is known that the rate of convergence for estimating the function $l_{\theta}$ is related to the rate of decreasing of $f^{*}_{\varepsilon}$. In particular, the smoother $f_{\varepsilon}$ is, the slower the rate of convergence for estimating is. This rate of convergence can be improved by assuming some additional regularity conditions on $l_{\theta}$ (see \cite{CfLcRy10} and \cite{MR2328553}).
\end{remark}

\begin{remark} Let us explain the choice of the contrast function and how the strategy of deconvolution works. The convergence of $\mathbf{P}_{n}m_{\theta}$ to $\mathbf{P}m_{\theta}=\E\left[m_{\theta}(\Y_{1})\right]$ as $n$ tends to the infinity is provided by the Ergodicity Theorem. Moreover, the limit $\E\left[m_{\theta}(\Y_{1})\right]$ of the contrast function can be explicitly computed. Using (\ref{mod1}) and
Assumptions \textbf{A1}-\textbf{A3}, standard computations (see Appendix \ref{appendicecontrast51}) lead to
\begin{equation*}
\E\left[m_{\theta}(\Y_1)\right]=\left\|l_{\theta}\right\|^{2}-2\left\langle
l_{\theta}, l_{\theta_{0}} \right\rangle =
\left\|l_{\theta}-l_{\theta_0}\right\|^{2}-\left\|l_{\theta_{0}}\right\|^{2},
\end{equation*}
which is, obviously, minimal at point $\theta=\theta_0$.
\end{remark}

\subsection{Asymptotic properties}

\noindent In this section we first show that our estimator is weakly consistent and asymptotically normally distributed for mixing processes. To this aim, we further assume that for $\varphi$ defined in (2) of Definition \ref{contrast_function} the following assumptions hold true: 
\begin{itemize}
\item[\textbf{A8}] (Local dominance): $\E\left[\sup_{\theta \in \Theta}\left|\varphi(Y_{i+1})u^{*}_{l_{\theta}}(Y_{i})\right|\right]<\infty$.
\item[\textbf{A9}] (Moment condition): For some $\delta >0$, $\E\left[\left|\varphi(Y_{i+1})u^*_{\nabla_{\theta}l_{\theta}}(Y_i) \right|^{2+\delta}\right]<\infty$.
\item[\textbf{A10}] (Hessian Local dominance): For some
neighbourhood $\mathcal{U}$ of $\theta_0$: $$\E\left[\sup_{\theta
\in
\mathcal{U}}\left\|\varphi(Y_{i+1})u^{*}_{\nabla_{\theta}^{2}l_{\theta}}(Y_{i})\right\|\right]<\infty$$.
\end{itemize}

\subsubsection{Asymptotic properties of the estimator: consistency and normality}

\noindent The first result regards the (weak) consistency of our estimator.

\begin{theorem} \label{consistency_theorem}
Consider the model (\ref{mod1}) under the assumptions
\textbf{A0}-\textbf{A8}, the estimator $\hat\theta_n$ defined by
(\ref{min}) is weakly consistent:
\begin{equation*}
\widehat{\theta}_{n} \longrightarrow \theta_{0}  \qquad \text{ as }n \rightarrow \infty  \text{ in }\mathbb{P}_{\theta_0}-\text{probability.} 
\end{equation*}
\end{theorem}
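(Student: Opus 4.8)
The plan is to follow the classical route for consistency of M-estimators (in the spirit of Wald, Jennrich, and van der Vaart, \emph{Asymptotic Statistics}, Thm.~5.7), adapted to the strictly stationary ergodic setting. Three ingredients are needed: (i) the limit criterion $\theta \mapsto \mathbf{P}m_\theta$ is continuous on $\Theta$ and has a well-separated unique minimizer at $\theta_0$; (ii) a uniform law of large numbers, $\sup_{\theta \in \Theta}\left|\mathbf{P}_n m_\theta - \mathbf{P}m_\theta\right| \longrightarrow 0$ in $\mathbb{P}_{\theta_0}$-probability; (iii) the standard ``argmin'' comparison argument that turns (i)+(ii) into $\widehat{\theta}_n \to \theta_0$.

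For (i): by the computation of the limit contrast recalled in the remark preceding the theorem, $\mathbf{P}m_\theta = \left\|l_\theta - l_{\theta_0}\right\|_2^2 - \left\|l_{\theta_0}\right\|_2^2$, so $\mathbf{P}m_\theta$ is minimal exactly where $l_\theta = l_{\theta_0}$ in $\mathbb{L}_2(\R)$; Assumption \textbf{A7} guarantees that $\theta_0$ is the unique such point, while \textbf{A6} yields continuity of $\theta \mapsto l_\theta$ (hence of $\theta \mapsto l_\theta^*$) in $\mathbb{L}_2(\R)$ and therefore continuity of $\theta \mapsto \mathbf{P}m_\theta$ on the compact set $\Theta$ (\textbf{A0}). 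Compactness then upgrades the unique minimum to a well-separated one: for every $\epsilon>0$, $\inf_{\|\theta-\theta_0\|\ge \epsilon}\mathbf{P}m_\theta > \mathbf{P}m_{\theta_0}$.

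For (ii): first note that $\Y_i=(Y_i,Y_{i+1})$ is strictly stationary and ergodic (by \textbf{A3}, \textbf{A4} and the discussion following the assumptions, $Y_i$ being the sum of an ergodic sequence and an independent i.i.d. noise). For each fixed $\y$, the map $\theta \mapsto m_\theta(\y) = \|l_\theta\|_2^2 - 2\varphi(y_{i+1})u^*_{l_\theta}(y_i)$ is continuous on $\Theta$, since $v\mapsto u_v$ is linear and $\theta\mapsto l_\theta$, $\theta\mapsto l_\theta^*$ are continuous by \textbf{A6}, so that $\theta\mapsto u^*_{l_\theta}(y_i)$ and $\theta\mapsto\|l_\theta\|_2^2$ are continuous. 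Assumption \textbf{A8} provides the integrable envelope $\E\big[\sup_{\theta\in\Theta}|\varphi(Y_{i+1})u^*_{l_\theta}(Y_i)|\big]<\infty$, and $\sup_{\theta\in\Theta}\|l_\theta\|_2^2<\infty$ by \textbf{A6} and compactness; a uniform ergodic theorem (the ergodic-theorem analogue of the uniform LLN, obtained by a finite-cover argument on $\Theta$ using the continuity in $\theta$ together with the envelope, cf. Ranga Rao's uniform strong law) then delivers $\sup_{\theta\in\Theta}\left|\mathbf{P}_n m_\theta - \mathbf{P}m_\theta\right|\to 0$ almost surely, hence in probability.

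For (iii): existence of a minimizer $\widehat{\theta}_n$ follows from continuity of $\theta\mapsto\mathbf{P}_n m_\theta$ and compactness of $\Theta$ (Appendix \ref{EoE}). Since $\mathbf{P}_n m_{\widehat{\theta}_n}\le \mathbf{P}_n m_{\theta_0}$, adding and subtracting $\mathbf{P}m_\cdot$ and invoking (ii) gives $\mathbf{P}m_{\widehat{\theta}_n}\le \mathbf{P}m_{\theta_0}+o_{\mathbb{P}}(1)$, and the well-separation in (i) forces $\|\widehat{\theta}_n-\theta_0\|\to 0$ in $\mathbb{P}_{\theta_0}$-probability. The main obstacle is step (ii): one must justify the uniform ergodic theorem for the non-i.i.d. sequence $(\Y_i)$ and, above all, verify that the deconvolution term $\varphi(Y_{i+1})u^*_{l_\theta}(Y_i)$ admits a genuine $\Theta$-uniform integrable envelope (this is precisely what \textbf{A8} is for) and that the $\theta$-continuity is uniform enough (equicontinuity on compacts) to run the covering argument; the Fourier/deconvolution structure of $u^*_{l_\theta}$ makes these regularity checks the delicate point, whereas identifiability and the final comparison are routine.
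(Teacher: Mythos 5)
Your proposal is correct and follows essentially the same route as the paper: the paper also establishes continuity and measurability of $\theta\mapsto m_\theta(\y)$, obtains the integrable envelope from the compactness bound on $\|l_\theta\|_2^2$ together with Assumption \textbf{A8}, applies a uniform law of large numbers for stationary ergodic sequences (Lemma \ref{ULLN}), and then invokes the standard argmin-comparison theorem of Newey and McFadden. The only cosmetic difference is that you spell out the well-separation of the minimizer and the final comparison step explicitly, whereas the paper delegates them to the cited consistency theorem.
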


\begin{proof}[Sketch of proof.]
The main idea for proving the consistency of a M-estimator comes from the following observation: if $\mathbf{P}_{n}m_{\theta}$ converges to $\mathbf{P}m_{\theta}$ in probability, and if the true parameter solves the limit minimization problem, then, the limit of the argminimum $\widehat{\theta}_n$ is $\theta_0$. By using an argument of uniform convergence in probability and by compactness of the parameter space, we show that the argminimum of the limit is the limit of the argminimum. A standard method to prove the uniform convergence is to use \emph{the Uniform Law of Large Numbers} (see Lemma \ref{ULLN} in Appendix \ref{CoE}). Combining these arguments with the dominance argument \textbf{(A8)} give the consistency of our estimator, and then, the Theorem \ref{consistency_theorem}. For further details see Appendix \ref{CoE}.
\end{proof}

The second result states our estimator is $\sqrt{n}$-consistent and asymptotically normally distributed. Besides, we give in Corollary \ref{lele} the different terms of the asymptotic variance-covariance matrix. For the CLT, we need some mixing properties (we refer the reader to \cite{Do94} for a complete review of mixing processes). Hence, in the following, we further assume that:
\begin{itemize}
\item[\textbf{A11} ] The stochastic process $X_i$ is $\alpha$-mixing.
\end{itemize}

\begin{theorem}\label{CLT}
Consider the model (\ref{mod1}) under the assumptions
\textbf{A0}-\textbf{A7}, and suppose that the conditions
\textbf{A8}-\textbf{A11} hold true. Then $\hat\theta_n$ defined by
(\ref{min}) is a $\sqrt{n}$-consistent estimator of $\theta_0$
which satisfies:
\begin{equation*}
\sqrt{n}(\widehat{\theta}_{n}-\theta_{0}) \overset{\mathcal{L}}\rightarrow \mathcal{N}\left(0,\Sigma(\theta_{0})\right).
\end{equation*}
\end{theorem}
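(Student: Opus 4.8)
The plan is to prove asymptotic normality via the standard M-estimator route: a Taylor expansion of the first-order condition around $\theta_0$, combined with a central limit theorem for $\alpha$-mixing arrays and a uniform law of large numbers for the Hessian. Since $\widehat{\theta}_n$ minimizes $\mathbf{P}_n m_\theta$ and $\theta_0$ lies in the interior $\Theta_0$ by \textbf{A0}, for $n$ large the consistency (Theorem~\ref{consistency_theorem}) places $\widehat{\theta}_n$ in the interior as well, so $\nabla_\theta \mathbf{P}_n m_{\widehat{\theta}_n} = 0$. A mean-value expansion coordinate by coordinate gives
\begin{equation*}
0 = \nabla_\theta \mathbf{P}_n m_{\theta_0} + \left(\nabla^2_\theta \mathbf{P}_n m_{\bar\theta_n}\right)(\widehat{\theta}_n - \theta_0),
\end{equation*}
where $\bar\theta_n$ lies on the segment between $\widehat{\theta}_n$ and $\theta_0$ (with the usual caveat that a different intermediate point is used for each row). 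The strategy is then: (i) show $\sqrt{n}\,\nabla_\theta \mathbf{P}_n m_{\theta_0} \overset{\mathcal{L}}{\to} \mathcal{N}(0, \Sigma_0)$ for an explicit $\Sigma_0$; (ii) show $\nabla^2_\theta \mathbf{P}_n m_{\bar\theta_n} \overset{\mathbb{P}}{\to} V_{\theta_0}$, which is nonsingular by \textbf{A7}; and (iii) invert to obtain $\sqrt{n}(\widehat{\theta}_n - \theta_0) \overset{\mathcal{L}}{\to} \mathcal{N}(0, V_{\theta_0}^{-1}\Sigma_0 V_{\theta_0}^{-1})$, setting $\Sigma(\theta_0) = V_{\theta_0}^{-1}\Sigma_0 V_{\theta_0}^{-1}$.

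For step (i), I would write $\nabla_\theta m_{\theta_0}(\Y_i) = 2\langle l_{\theta_0}, \nabla_\theta l_{\theta_0}\rangle - 2\varphi(Y_{i+1}) u^*_{\nabla_\theta l_{\theta_0}}(Y_i)$ and observe that, by the contrast computation in the preceding remark (the limit is minimized at $\theta_0$), $\mathbf{P}\nabla_\theta m_{\theta_0} = 0$, so $\nabla_\theta \mathbf{P}_n m_{\theta_0}$ is an average of centered random variables. The summands form a stationary sequence which is a measurable function of $(Y_i, Y_{i+1})$, hence $\alpha$-mixing with mixing coefficients controlled by those of $(X_i)$ under \textbf{A11} (the additive i.i.d.\ noise and the two-step block do not destroy mixing). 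Assumption \textbf{A9} supplies the $(2+\delta)$-moment bound needed to apply a CLT for $\alpha$-mixing sequences (e.g.\ the Ibragimov/Doukhan central limit theorem), yielding asymptotic normality with long-run covariance $\Sigma_0 = \Var(\nabla_\theta m_{\theta_0}(\Y_1)) + 2\sum_{k\geq 1}\mathrm{Cov}(\nabla_\theta m_{\theta_0}(\Y_1), \nabla_\theta m_{\theta_0}(\Y_{1+k}))$; this series is where Corollary~\ref{lele} presumably records the individual terms. For step (ii), I would apply the Uniform Law of Large Numbers (Lemma~\ref{ULLN}) to $\theta \mapsto \nabla^2_\theta \mathbf{P}_n m_\theta$ on the neighbourhood $\mathcal{U}$, using the Hessian local dominance \textbf{A10} and the smoothness in \textbf{A5}--\textbf{A6} to get $\sup_{\theta\in\mathcal{U}}\|\nabla^2_\theta \mathbf{P}_n m_\theta - \nabla^2_\theta \mathbf{P} m_\theta\| \to 0$ in probability; combined with $\bar\theta_n \overset{\mathbb{P}}{\to}\theta_0$ and continuity of $\theta\mapsto \nabla^2_\theta \mathbf{P}m_\theta = V_\theta$, this gives $\nabla^2_\theta \mathbf{P}_n m_{\bar\theta_n}\overset{\mathbb{P}}{\to} V_{\theta_0}$.

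The main obstacle is step (i): verifying that the Fourier-deconvolution summand $\varphi(Y_{i+1}) u^*_{\nabla_\theta l_{\theta_0}}(Y_i)$ is well-defined (integrable) and has the required $(2+\delta)$ moment, and that differentiation under the integral sign defining $u^*_{l_\theta}$ is legitimate, so that $\nabla_\theta u^*_{l_\theta} = u^*_{\nabla_\theta l_\theta}$. This is exactly what Assumptions \textbf{A6}, \textbf{A8} and \textbf{A9} are designed to control: \textbf{A6} gives $\nabla_\theta l_\theta \in \mathbb{L}_1\cap\mathbb{L}_2$ so its Fourier transform is bounded and the deconvolution functional makes sense under \textbf{A1} ($f^*_\varepsilon$ nonvanishing), while \textbf{A9} is the moment hypothesis feeding the mixing CLT. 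A secondary technical point is transferring the $\alpha$-mixing property from $(X_i)$ to the bivariate functional $\Y_i \mapsto \nabla_\theta m_{\theta_0}(\Y_i)$ and checking summability of the covariance series; this follows from standard mixing inequalities (a covariance bound in terms of $\alpha(k)^{\delta/(2+\delta)}$ and the moment bound), provided the mixing rate decays fast enough, which I would either assume as part of \textbf{A11} or note as the precise rate requirement. Once these are in place, the conclusion follows by Slutsky's theorem.
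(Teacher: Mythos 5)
Your proposal is correct and follows essentially the same route as the paper: the paper invokes a standard asymptotic-normality lemma for M-estimators of $\alpha$-mixing sequences (its Lemma~\ref{F}, which internally performs exactly your Taylor-expansion/CLT/ULLN argument) and then, as you do, reduces the work to justifying $\nabla_\theta u^*_{l_\theta} = u^*_{\nabla_\theta l_\theta}$ by differentiation under the integral sign and checking the $(2+\delta)$-moment condition via \textbf{A9} and the Hessian local dominance via \textbf{A10}. Your observation that \textbf{A11} alone specifies no mixing rate and that summability of $\alpha(k)^{\delta/(2+\delta)}$ must be imposed is accurate and corresponds precisely to condition (3) of the paper's cited lemma.
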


\begin{proof}[Sketch of proof.]
The asymptotic normality follows essentially from Central Limit Theorem for mixing processes (see \cite{Ga04}). Thanks to the consistency, the proof is based on a moment condition of the Jacobian vector of the function $m_{\theta}(\y)$ and on a local dominance condition of its Hessian matrix. 
For further details, see Appendix \ref{ANoE}.
\end{proof}

The following corollary gives an expression of the variance-covariance matrix $\Sigma(\theta_{0})$ of Theorem \ref{CLT} for the practical implementation:

\begin{corollary}\label{lele}
Under our assumptions, the variance-covariance matrix $\Sigma(\theta_{0})$ is given by:
\begin{eqnarray*}
\Sigma(\theta_0)=V_{\theta_0}^{-1} \Omega(\theta_0) = V_{\theta_0}^{-1}  \left[\Omega_{0}(\theta_0)+ 2\sum_{j=2}^{+\infty} \Omega_{j-1}(\theta_0)\right]V_{\theta_0}^{-1'},
\end{eqnarray*}
with

\begin{eqnarray*}
\Omega_{0}(\theta_0)&=& 4\left\{\E\left[\left(\varphi(Y_2)u^*_{\nabla_{\theta}l_{\theta}}(Y_1)
\right)\left(\varphi(Y_2)u^*_{\nabla_{\theta}l_{\theta}}(Y_1)
\right)'\right]\right.\\
&\qquad & -\left.\E\left[\left(b^2_{\theta_0}(X_1)+\sigma^2_{\theta_0}(X_1)\right) \nabla_{\theta}l_{\theta}(X_1)\right] \E\left[\left(b^2_{\theta_0}(X_1)+\sigma^2_{\theta_0}(X_1)\right) \nabla_{\theta}l_{\theta}(X_1)\right]'\right\} \\
\Omega_{j-1}(\theta_0) &=& 4\left\{\E\left[\left(b^2_{\theta_0}(X_1)+\sigma^2_{\theta_0}(X_1)\right)
\nabla_{\theta}l_{\theta}(X_1)
\left(\left(b^2_{\theta_0}(X_j)+\sigma^2_{\theta_0}(X_j)\right)\nabla_{\theta}l_{\theta}(X_j)\right)'\right]\right.\\
&\quad & \left. -\E\left[\left(b^2_{\theta_0}(X_1)+\sigma^2_{\theta_0}(X_1)\right) \nabla_{\theta}l_{\theta}(X_1)\right] \E\left[\left(b^2_{\theta_0}(X_1)+\sigma^2_{\theta_0}(X_1)\right) \nabla_{\theta}l_{\theta}(X_1)\right]'\right\}.
\end{eqnarray*} 
and the gradient $\nabla_{\theta}l_{\theta}$ is taken at point $\theta =\theta_0$; furthermore, the Hessian matrix $V_{\theta_0}$ is given by:

\begin{eqnarray*}
\left(\left[V_{\theta_0}\right]_{j,k}\right)_{1\leq j,k\leq r}&=&2\left( \left\langle \frac{\p l_{\theta}}{\p  \theta_k}, \frac{\p l_{\theta}}{\p \theta_j}\right\rangle \right)_{j,k} \text { at point $\theta=\theta_0$.} 
\end{eqnarray*}
\end{corollary}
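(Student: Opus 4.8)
The plan is to derive the sandwich formula $\Sigma(\theta_0)=V_{\theta_0}^{-1}\Omega(\theta_0)V_{\theta_0}^{-1\prime}$ by unpacking the two ingredients that already appeared implicitly in the sketch of Theorem \ref{CLT}: the Hessian of $\theta\mapsto\mathbf{P}m_\theta$ at $\theta_0$ (which gives $V_{\theta_0}$) and the long-run variance of the score process $\nabla_\theta m_\theta(\Y_i)$ evaluated at $\theta_0$ (which gives $\Omega(\theta_0)$). The standard delta-method/Taylor argument for M-estimators gives $\sqrt{n}(\widehat\theta_n-\theta_0)=-V_{\theta_0}^{-1}\sqrt{n}\,\mathbf{P}_n\nabla_\theta m_{\theta_0}+o_{\mathbb P}(1)$, so the only real work is (i) computing $V_{\theta_0}$ explicitly and (ii) identifying the asymptotic variance of $\sqrt{n}\,\mathbf{P}_n\nabla_\theta m_{\theta_0}$ as $\Omega_0(\theta_0)+2\sum_{j\ge2}\Omega_{j-1}(\theta_0)$ via the CLT for $\alpha$-mixing sequences.

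First I would compute $V_{\theta_0}$. From Definition \ref{contrast_function} and the remark giving $\mathbf{P}m_\theta=\|l_\theta-l_{\theta_0}\|^2-\|l_{\theta_0}\|^2$, differentiating twice in $\theta$ under the integral sign (justified by \textbf{A6}) yields $\nabla^2_\theta\mathbf{P}m_\theta\big|_{\theta_0}=2\big(\langle\partial_{\theta_k}l_\theta,\partial_{\theta_j}l_\theta\rangle\big)_{j,k}$ at $\theta=\theta_0$, since the cross term involving $\nabla^2_\theta l_\theta$ carries the factor $l_\theta-l_{\theta_0}$ which vanishes at $\theta_0$. This is exactly the stated expression for $V_{\theta_0}$, and it is nonsingular by \textbf{A7}.

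Next I would handle $\Omega(\theta_0)$. Write $Z_i:=\nabla_\theta m_{\theta_0}(\Y_i)=-2\varphi(Y_{i+1})u^*_{\nabla_\theta l_{\theta_0}}(Y_i)$ (the $\|l_\theta\|^2$ term contributes a non-random gradient that cancels because $\theta_0$ is a minimizer, i.e. $\mathbf{P}Z_i=0$; one checks $\E[\varphi(Y_2)u^*_{\nabla_\theta l_{\theta_0}}(Y_1)]=\langle\nabla_\theta l_{\theta_0},l_{\theta_0}\rangle=\tfrac12\nabla_\theta\|l_\theta\|^2$). The CLT for $\alpha$-mixing sequences (\cite{Ga04}), applicable thanks to \textbf{A11} and the moment bound \textbf{A9}, gives $\sqrt{n}\,\mathbf{P}_nZ\to\mathcal N(0,\Omega(\theta_0))$ with $\Omega(\theta_0)=\Var(Z_1)+2\sum_{j\ge2}\C(Z_1,Z_j)$. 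The key deconvolution computation, identical in spirit to the one behind the explicit form of the contrast (Appendix \ref{appendicecontrast51}), is the conditional-expectation identity: conditioning on the hidden chain $(X_i)$ and using independence of $\varepsilon$ from $X,\eta$ (\textbf{A4}) together with the defining property of $u^*_v$, one gets $\E[\varphi(Y_{i+1})u^*_{\nabla_\theta l_{\theta_0}}(Y_i)\mid (X_k)_k]=\big(b^2_{\theta_0}(X_i)+\sigma^2_{\theta_0}(X_i)\big)\nabla_\theta l_{\theta_0}(X_i)$. For the diagonal block $\Var(Z_1)$ one term cannot be reduced this way (the "square" stays as $\E[(\varphi(Y_2)u^*_{\nabla_\theta l}(Y_1))(\cdot)']$), producing the first line of $\Omega_0$; for the off-diagonal covariances $j\ge2$ the product $\varphi(Y_{i+1})u^*(\cdot)$ and $\varphi(Y_{j+1})u^*(\cdot)$ involve disjoint noise indices, so the tower property collapses both factors and yields the $X$-only expression in $\Omega_{j-1}$. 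Subtracting $\mathbf{P}Z_1\,(\mathbf{P}Z_1)'$ expressed through the same conditional identity gives the centering terms, and the overall factor $4=(-2)^2$ is carried through.

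The main obstacle is the conditional-expectation/deconvolution identity for products of the form $\varphi(Y_{i+1})u^*_{\nabla_\theta l_{\theta_0}}(Y_i)$ — showing carefully that integrating out $\varepsilon_i$ against $u^*_{v}$ reproduces $v(X_i)$ and that $\E[\varphi(Y_{i+1})\mid X_{i+1}]=b^2_{\theta_0}(X_i)+\sigma^2_{\theta_0}(X_i)$ uses \textbf{A2} (unit variance, zero third moment) and the model equation $X_{i+1}=b_{\theta_0}(X_i)+\sigma_{\theta_0}(X_i)\eta_{i+1}$ in an essential way, and one must keep track of which $\varepsilon$'s are shared between the two factors in $Z_1Z_j'$ to know when the square survives versus when it factorizes. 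Once that identity is in hand, everything else is bookkeeping: linearity of expectation, stationarity to shift indices, and assembling $V_{\theta_0}^{-1}\Omega(\theta_0)V_{\theta_0}^{-1\prime}$. I would present the $V_{\theta_0}$ computation first, then the score identity, then the variance decomposition, citing Appendix \ref{ANoE} for the Taylor-expansion step already established in Theorem \ref{CLT}.
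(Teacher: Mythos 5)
Your proposal follows essentially the same route as the paper's proof: you obtain $V_{\theta_0}$ by twice differentiating $\mathbf{P}m_{\theta}=\|l_{\theta}-l_{\theta_0}\|^{2}-\|l_{\theta_0}\|^{2}$ and noting the cross term vanishes at $\theta_0$, and you obtain $\Omega(\theta_0)$ as the long-run variance of the score $-2\varphi(Y_{i+1})u^{*}_{\nabla_{\theta}l_{\theta_0}}(Y_i)$ (the deterministic gradient of $\|l_{\theta}\|^{2}$ dropping out of all variances), reducing each mean and cross-moment to an expression in the hidden chain via the same deconvolution identity the paper establishes with Fubini's theorem in Appendix \ref{PC}. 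Your phrasing of that identity as a conditional expectation given $(X_k)_k$, and your explicit warning about which $\varepsilon$'s are shared between the two factors of $Z_1Z_j'$, are only cosmetic refinements of the paper's argument.
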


\begin{proof}
See Appendix \ref{PC} for further details.
\end{proof}

\section{ Application: the CIR process}\label{resultats}

We consider the following stochastic volatility model
\begin{eqnarray}\label{state_space_Heston}
\left\lbrace\begin{array}{ll}
Y_i=X_i+\varepsilon_{i}\\
X_{i+1}=X_i+ \kappa (\mu-X_i)\Delta+\sigma \sqrt{\Delta X_i}\eta_{i+1}
\end{array}
\right.
\end{eqnarray} 
where $\varepsilon_i$ follows a log chi-squared distribution and $\eta_i$ a gaussian distribution. This can be also seen as a discrete version of the so called Heston model with independent noise between the log-returns $Y_i$ and its volatility $X_i$. Here, we further assume that the variance process $X_i$ is greater than zero. To ensure this condition, we make the following assumption:

\begin{equation*}
\textbf{F} \qquad a:=\frac{2\kappa\theta}{\sigma^2}\geq 1 \text{ and } c:=\frac{2\kappa}{\sigma^2}>0,
\end{equation*}
which is known as the Feller condition (see \cite{CIR85}) and implies that 
the variance process $X_i$ is ergodic and $\rho$-mixing. Furthermore, the stationary distribution $f_{\theta}$ for this process is the gamma distribution $\gamma(a,c)$ (see \cite{GvJtLc99}).

\subsection{Minimum contrast estimator}\label{modelHestDiscret}

In this case, the functions $b_{\cdot}$, $\sigma_{\cdot}$ and $l_{\cdot}$ are given by:

\begin{equation*}
b_{\theta}(x)=(1-\kappa)x+\kappa\theta, \quad \sigma_{\theta}(x)=\sigma \sqrt{x} \text{ and } l_{\theta}= \left(b^2_{\theta}(x)+\sigma^2_{\theta}(x)\right)\gamma(a,c)
\end{equation*}
$\forall x \in \R^{*}_{+}$ with $\theta=(\kappa, \mu, \sigma)$. Using the Fourier transform of the Gamma and the log chi-squared density, we have

\begin{equation*}
f^{*}_{\theta}(x)=\left(1-\frac{ix}{c}\right)^{-a} \text{ and }
f^{*}_{\varepsilon}(x)=\frac{1}{\sqrt{\pi}}2^{ix}\Gamma(\frac{1}{2}+ix)\exp(-iCx),
\end{equation*}
with $C$ the expectation of the logarithm of a chi-squared random variable, \emph{i.e.} $C=-1.27$ (see \cite{abram} and Appendix \ref{appendice2} for the expression of the Fourier transform). Next, the Fourier transform of $l_{\theta}$ is given by
\begin{equation*}
l_{\theta}^{*}(x)=-\alpha_1\left[\frac{-a}{c^{2}}(a+1)\left(1-\frac{ix}{c}\right)^{-a-2}\right]+i\alpha_2\frac{a}{c}\left(1-\frac{ix}{c}\right)^{-a-1}+\alpha_3\left(1-\frac{ix}{c}\right)^{-a}.
\end{equation*}
with $\alpha_1=\left(1-\kappa\right)^2,\ \alpha_2=2\left(1-\kappa\right)\kappa\theta+\sigma^2$, $\ \alpha_3=(\kappa\theta)^2$. Finally, the $\mathbb{L}_2$-norm of $l_{\theta}$ is given by:

\begin{eqnarray*}
\left\|l_{\theta}\right\|^2_{2} &=& \alpha_1^2 2^{-(2a+3)}c^{-3}\frac{\Gamma(2a+3)}{\Gamma^{2}(a)} + 2\alpha_1\alpha_2 2^{-(2a+2)}c^{-2}\frac{\Gamma(2a+2)}{\Gamma^{2}(a)}\\
&&\quad + \left(2\alpha_1\alpha_3+\alpha_2^2\right) 2^{-(2a+1)}c^{-1}\frac{\Gamma(2a+1)}{\Gamma^{2}(a)} + \alpha_2\alpha_3 2^{-(2a)}\frac{\Gamma(2a)}{\Gamma^{2}(a)}\\
&& \quad + \alpha_3^2 2^{-2a+1}c\frac{\Gamma(2a-1)}{\Gamma^{2}(a)}.
\end{eqnarray*}
where $\Gamma$ corresponds to the Gamma function given by $\Gamma(z)= \int_{\R_{+}}t^{z-1}\exp(-t)dt$.
\begin{proof} See Appendix \ref{appendice2}.
\end{proof}


Hence, the M-estimator solves:

\begin{equation}\label{contraste_application3}
\widehat{\theta}_n= \arg \min_{\theta \in \Theta}\left\{ \left\|l_{\theta}\right\|^2_{2}-\frac{2}{n}\sum_{i=1}^{n}Y_{i+1}u^{*}_{l_{\theta}}(Y_{i})\right\}
\end{equation}
where: 
\begin{equation*}
u_{l_{\theta}}(y)=\sqrt{\pi}\left(\frac{\alpha_1\left[\frac{a}{c^{2}}(a+1)\left(1-\frac{iy}{c}\right)^{-a-2}\right]+i\alpha_2\frac{a}{c}\left(1-\frac{iy}{c}\right)^{-a-1}+\alpha_3\left(1-\frac{iy}{c}\right)^{-a}}{2^{i\beta y}\Gamma(\frac{1}{2}+i\beta y )\exp(-i\tilde{C}y)}\right)
\end{equation*}

\subsection{Others methods}\label{comp-time}

\textbf{Particle filters :  EKF, APFS, APF and KSAPF.} For the comparison with our contrast estimator given in (\ref{contraste_application3}), we use the following methods: the Extended Kalman Filter (EKF), the Auxiliary Particle filter (APF), the Auxiliary Particle filter with static parameter (APFS) and the Kernel Smoothing Auxiliary Particle filter (KSAPF). We refer the reader to \cite{MR1684426}, \cite{PiSh07}, \cite{DoFr01} and \cite{liu} for a complete revue of these methods.\\

In order to estimate the parameters with these methods, for the EKF, APF and KSAPF estimators we use the Kitagawa and al.'s approach (see \cite{DoFr01} chapter 10 p.189) in which the parameters are supposed time-varying: $\theta_{i+1}=\theta_{i}+\mathcal{G}_{i+1}$ where $\mathcal{G}_{i+1}$ is a centered Gaussian random with a variance matrix $Q$ supposed to be known. Hence, we consider the augmented state vector $\tilde{X}_{i+1}=(X_{i+1}, \theta_{i+1})'$ where $X_{i+1}$ is the hidden state variable and $\theta_{i+1}$ the unknown vector of parameters. Furthermore, for the numerical part we call APFS an Auxiliary Particle filter without time-varying parameters. \\

\textbf{The MCEM.} From a theoretical point of view, the MLE is asymptotically efficient. However, in practice since the states $(X_{1}\cdots, X_{n})$ are unobservable and since the model \eqref{state_space_Heston} is non Gaussian, the likelihood is untractable. We have to use numerical methods to approximate it. In this section, we illustrate the MCEM estimator which consists in approximating the likelihood and applying the Expectation-Maximisation algorithm introduced by Dempster \cite{dempster} to find the parameter $\theta$.\\

\subsection{Numerical Results}

 In this section we present some Monte Carlo simulations using the model \eqref{state_space_Heston}. For the analysis we consider the following ``true parameter'' $\theta_0=(\kappa_0, \mu_0, \sigma_0^{2})=(4,0.03,0.4)$ which is consistent with empirical applications of daily data (see \cite{bin}). Thus, we have sampled the trajectory of the $X_i$, and conditionally to the trajectory, we have sampled the variables $Y_i$ with a variance noise $s^{2}_{\varepsilon}=0.1$ \footnote{ For the simulation (see Appendix \ref{appendice2}) we take $f^{*}_{\varepsilon}(x)=\frac{1}{\sqrt{\pi}}2^{i\beta x}\Gamma(\frac{1}{2}+i\beta x)\exp(-i\tilde{C}x)$ with $\beta=\sqrt{2s_{\varepsilon}^2/\pi^2}$ and $\tilde{C}=\beta C$ where $C=-1.27$.}

The numerical illustration goes as follows: we work with a number of observations $n$ equal to $1000$, we first compare all the methods proposed in term of computing time, then we run $N=100$ estimates for each method and we compare the performance of our estimator with others methods by computing the Mean Square Error (MSE) defined as:

\begin{equation}\label{MSE}
{\rm MSE}=\frac{1}{N}\sum_{i=1}^{p}\sum_{j=1}^{N}(\hat{\theta}_{j}^{i}-\theta_{0}^{i})^2,
\end{equation}
where $p$ corresponds to the dimension of the vector of parameters. 

Then, we illustrate the statistical properties of our contrast estimator, by computing the confidence intervals for different number of observations. Finally, we study the influence of the signal noise ratio since the performance of our estimator depends on the regularity of $l^{*}_{\theta}$ and $f^{*}_{\varepsilon}$ (see Remark \ref{rem:smoothnessof fouriertransform}).
\\

For particles methods, we take a number of particles $M$ equal to $5000$.  Note that for the Bayesian procedure (APF, APFS and KSAPF) we need a prior on $\theta$, and this only at the first step. The prior for $\theta$ is taken to be the Uniform law and conditionally to $\theta$ the distribution of $X_1$ is its stationary law:

\begin{equation*}
\left\lbrace\begin{array}{ll}
p(\theta)=\mathcal{U}(3, 5)\times \mathcal{U}(0.02, 0.04) \times \mathcal{U}(0.3, 0.5) \\
f_{\theta}(X_1)=\gamma\left(\theta \right)
\end{array}
\right.
\end{equation*}
For the KSAPF, we take a bandwidth $h=0.1$ and for the APF we take a matrix $Q=[10^{-3}, 0.1.10^{-4}, 10^{-4}]\mathbf{I}_{3}$ with $\mathbf{I}_{3}$ the identity matrix in $\R^{3}$ (see Section \ref{comp-time} for the definition of the matrix $Q$). 

\subsubsection{Computing time}\label{comp-time}

\begin{table}[h!]
\caption{\label{time_gaussian} Comparison of the CPU for Particle filters estimators, MCEM estimator and Contrast estimator. }
\begin{center}
\begin{tabular}{|c|c|c|c|c|c|c|}
\hline
 & \multicolumn{1}{|c|} {APF}  & \multicolumn{1}{|c|} {KSAPF} & \multicolumn{1}{|c|} {APFS}&  \multicolumn{1}{|c|} {EKF}  & Contrast  &  \multicolumn{1}{|c|} {MCEM} \\
\hline
CPU (sec) & 105.1695  & 93.8846  & 192.2166  & 0.2 &  20.4074 & 217430\\
 \hline
\end{tabular}
\end{center}
\end{table}
This comparison illustrates the numerical complexity of the MCEM. Therefore, in the following, we only compare our contrast estimator with the particles estimators.\\

\subsubsection{Parameter estimates} 
We  illustrate by boxplots the different estimates (see Figures [\ref{compar_param_partB3A}] up to [\ref{compar_param_partB3C}]). In Table [\ref{resu_Heston_APF_EKFAB}], we compute the MSE defined in \eqref{MSE} for each method and with a number of MC equal to $N=100$ and the CPU for a number of observations $n=1000$ (see Table [\ref{time_gaussian}]). \\

We note that for all parameters, the EKF estimator is very bad since the stochastic volatility model \eqref{state_space_Heston} is strongly nonlinear, and its corresponding boxplots have the largest dispersion meaning that this filter is not stable and not appropriated to estimate this model. Among particle filters, the KSAPF and the APF are the best estimators although the dispersion is huge for the mean reversion parameter $\kappa$ and the volatility parameter $\sigma$.\\ Besides, the APFS is less efficient than the others particle filters. Our estimator is stable and performs the others when one compare the MSE. From a computational point of view, all particles filters have an equivalent CPU. Our contrast estimator is fast and its implementation is straightforward and the MSE is the smallest (see Table [\ref{time_gaussian}]). 

`
\begin{table}[h!]
\caption{\label{resu_Heston_APF_EKFAB} Comparison of the $MSE(\theta)$ for Particle filters and Contrast estimator. }

\begin{center}
\begin{tabular}{|c|c|c|c|c|c|}
\hline
 & \multicolumn{1}{|c|} {APF}  & \multicolumn{1}{|c|} {KSAPF} & \multicolumn{1}{|c|} {APFS}&  \multicolumn{1}{|c|} {EKF}  & Contrast  \\
\hline
MSE$(\theta)$ & 0.189  & 0.166  & 0.205  & 0.43 &  0.124 \\
 \hline
\end{tabular}
\end{center}
\end{table}
\begin{figure}[h!]
\begin{center}
\includegraphics[width=139mm, height=70mm]{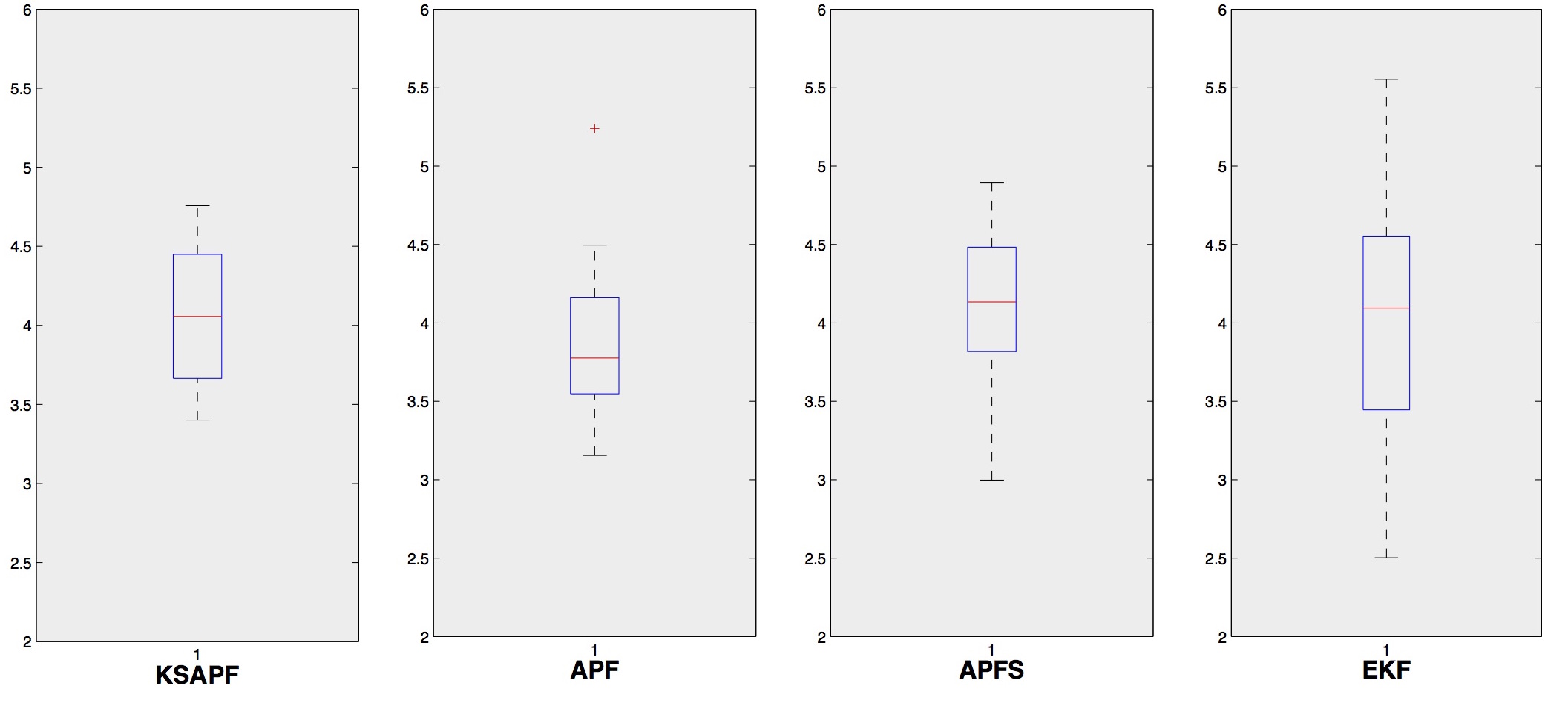}\includegraphics[width=32.25mm, height=70mm]{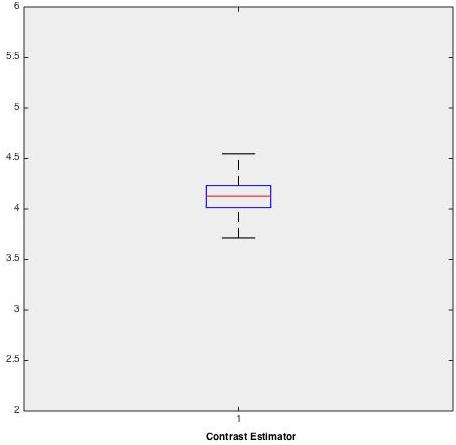}
\end{center}
\caption{ \label{compar_param_partB3A} Boxplot of the parameter $\kappa$. True value equal to $4$.}
\end{figure}

\begin{figure}[h!]
\begin{center}
\includegraphics[width=139mm, height=70mm]{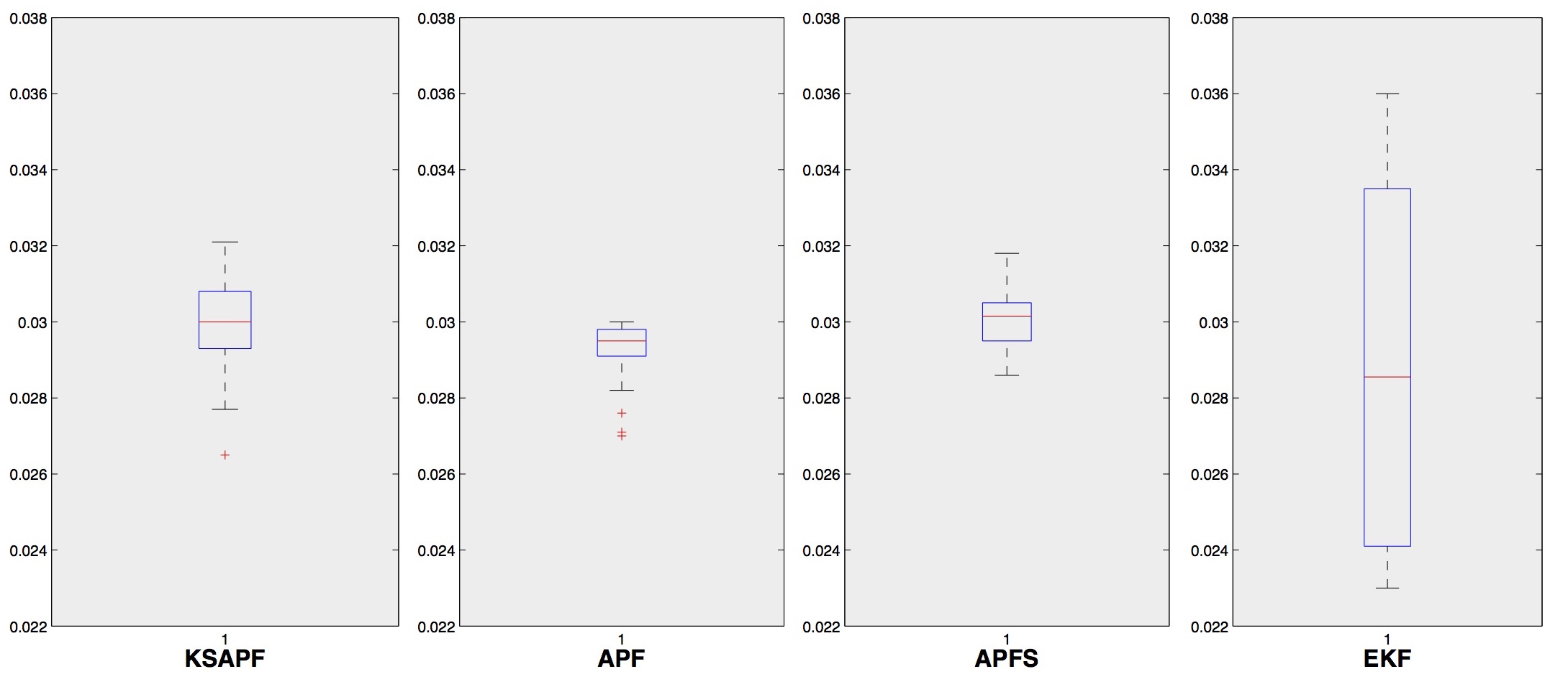}\includegraphics[width=32.25mm, height=70mm]{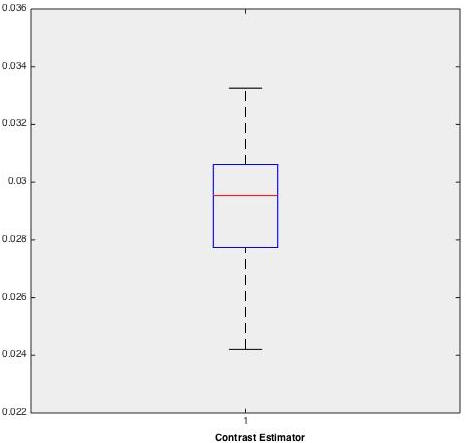}
\end{center}
\caption{ \label{compar_param_partB3B} Boxplot of the parameter $\mu$. True value equal to $0.03$.}
\end{figure}

\begin{figure}[h!]
\begin{center}
\includegraphics[width=139mm, height=70mm]{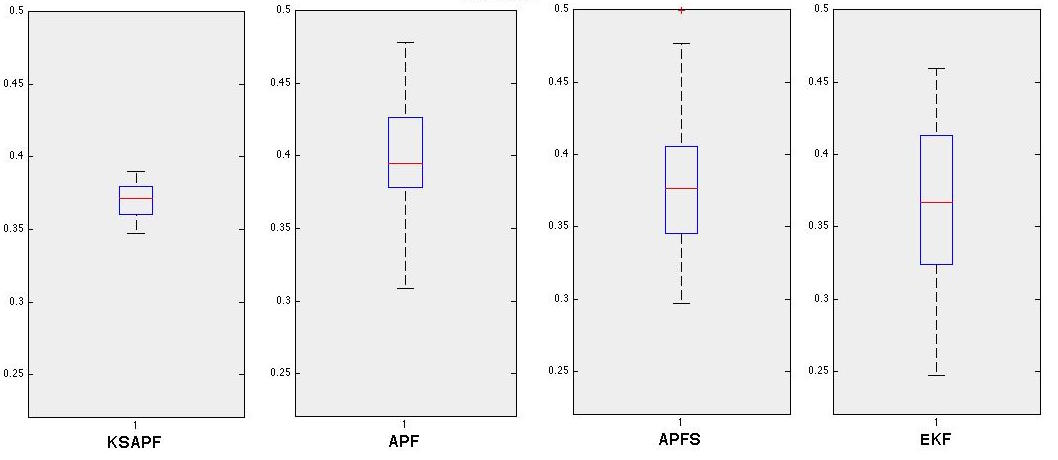}\includegraphics[width=32.25mm, height=70mm]{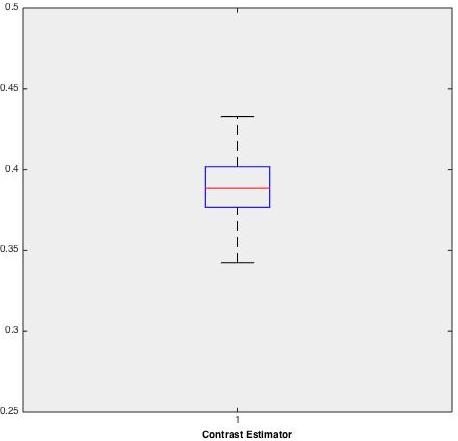}
\end{center}
\caption{ \label{compar_param_partB3C} Boxplot of the parameter $\sigma^2$. True value equal to $0.4$.}
\end{figure}
\newpage

\subsubsection{Confidence Interval of the minimum contrast estimator}

To illustrate the statistical properties of our contrast estimator, we  compute the confidence intervals with the confidence level $1-\alpha$ equal to $0.95$ for $N=100$ estimators. The coverage corresponds to the number of times for which the true parameter $\theta_{0}^i, i=1,\cdots, p$ belongs to the confidence interval. The results are illustrated in Figure [\ref{Covera1}]. We note that the coverage converges to $95\%$ for a small number of observations and as expected, the confidence interval decreases with the number of observations. Note that of course a MLE confidence interval would be smaller since the MLE is efficient but the corresponding computing time would be huge (see Table [\ref{time_gaussian}]). 

\begin{figure}[h!]
\begin{center}
\includegraphics[width=149mm, height=50mm]{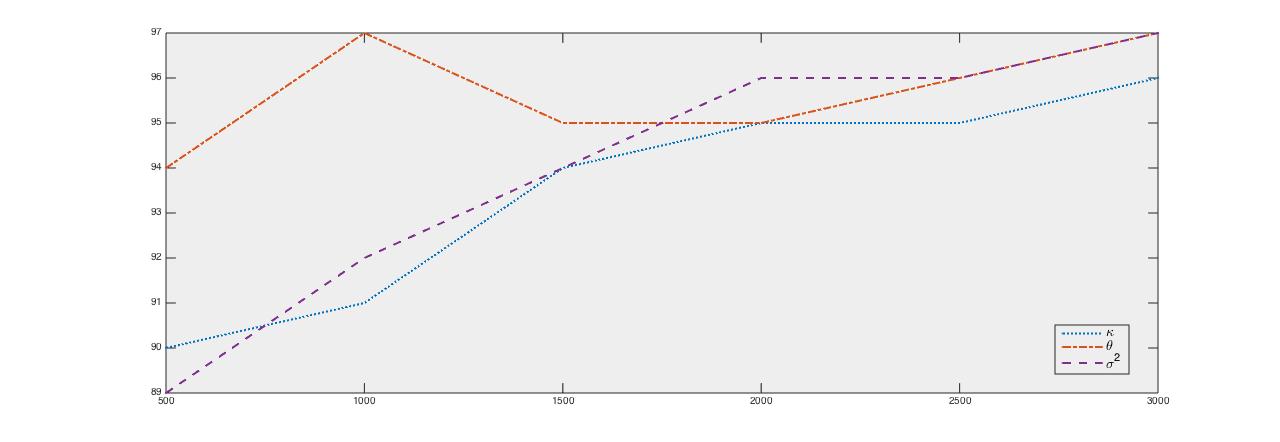}
\end{center}
\caption{\normalsize Coverage with respect to the number of observations $n=500$ up to $3000$ for $N=100$ estimators.}\label{Covera1} 
\end{figure}

  \newpage    

\subsubsection{Ratio signal-noise for the contrast estimator}

We denote by $r=[s^{2}_{\varepsilon}/\sigma^{2}]$ the ratio signal-noise and in Table (\ref{compar3}) we compare the MSE for different $r$ and different number of observations $n$ for the contrast estimator. We note that the MSE decreases with the number of observations and is smaller for small ratio-signal-noise. As explained in Section \ref{contr} and see \cite{CfLcRy10} for more details, the rate of convergence of our approach depends on the regularity of the noise density $f_{\varepsilon}$. And, in particular, the smoother the noises are, the slower the rate of convergence is. For the CIR model, the density of the noises and the function $l_{\theta}$ are ordinary smooth, so we are in a favourable case.

\begin{table}[h!]
\begin{center}
\captionof{table}{Ratio signal-noise for the estimation of the CIR model}\label{compar3}
\begin{tabular}{|c|c|c|c|c|}
\hline
\quad &  \multicolumn{1}{c|}{Mean($\hat{\mu}_n)$}  &  \multicolumn{1}{c|}{Mean($\hat{\kappa}_n)$} & \multicolumn{1}{c|}{Mean($\hat{\sigma}^2_n)$} &  MSE \\
\hline
$n=500$ and $r=0.1$ & 0.0315 & 3.88 & 0.401 & 0.14\\
\hline
$n=500$ and $r=1$ & 0.0303 & 3.89 & 0.405 & 0.16 \\
\hline
$n=1000$ and $r=0.1$  & 0.0312 &  3.76 & 0.401 & 0.11 \\
\hline
$n=1000$ and $r=1$ & 0.0308 &  3.83 & 0.41 & 0.18\\
\hline
\end{tabular}
\end{center}
\end{table}

$\newline$

\subsection{Summary and Conclusions}

In this paper we have proposed a new method to estimate hidden nonlinear diffusion process. This method is based on a deconvolution strategy and leads to consistent and asymptotically normal estimator. We have numerically studied the performance of our estimator for the CIR process widely used in many domains and we were able to construct confidence interval (see Figure [\ref{Covera1}]). As the boxplots [\ref{compar_param_partB3A}] up to [\ref{compar_param_partB3C}] show, only Contrast, APF, and KSAPF estimators are comparable. Indeed EKF and APFS estimators are biased and their MSE are bad, especially for the EKF method since the CIR process is nonlinear.  Furthermore, if one compares the MSE of the particle filters, the KSAPF estimator is the best method. Among particles filters, it is clearly known that the APFS is less efficient than the APF filter since the parameters are not time-varying and so the only randomness is made at the first step by the prior law and not in each propagation step.\\
 Then, the Contrast, APF, and KSAPF methods lead to unbiased and not so much varying estimator. We emphasize that our estimator performs the others in a MSE aspect (see Table [\ref{time_gaussian}]). Most importantly, our estimator can be constructed without any arbitrary parameters choice, is straightforward to implement, fast and allows to construct confidence interval.\\

\bibliographystyle{apalike}
\bibliography{bibio_Mestimat_Ext}

\section{Appendix}\label{A}

\subsection{The contrast function}\label{appendicecontrast51}

\textbf{The procedure:} The limit $\E\left[m_{\theta}(\Y_{1})\right]$ of the contrast function can be explicitly computed. Using (\ref{mod1}) and
Assumptions \textbf{A1}-\textbf{A3}, we obtain:
\begin{eqnarray*}
\E\left[ (Y_2^2-s_{\varepsilon}^2)  u^*_{l_{\theta}}(Y_1)\right] &=& \E\left[ \left(X_2^2+ 2X_2\varepsilon_2 + \varepsilon_2^2 - s_{\varepsilon}^2 \right)  u^*_{l_{\theta}}(Y_1)\right]\\
&=&\E\left[ X_2^2  u^*_{l_{\theta}}(Y_1)\right] \text{ by assumption \textbf{A1}}\\
&=& \E\left[ \left(b^2_{\theta_{0}}(X_1)+ \sigma^2_{\theta_{0}}(X_1)\eta_2^2 + 2b_{\theta_{0}}(X_1)\sigma_{\theta_{0}}(X_1)\eta_2 \right)  u^*_{l_{\theta}}(Y_1)\right]\\
&=&  \E\left[ \left(b^2_{\theta_{0}}(X_1)+
\sigma^2_{\theta_{0}}(X_1)\right)  u^*_{l_{\theta}}(Y_1)\right]  \text{ by assumption \textbf{A2}},
\end{eqnarray*}
Using Fubini's Theorem and (\ref{mod1}), it follows that:
\begin{eqnarray*}
\E\left[ \left(b^2_{\theta_{0}}(X_1)+
\sigma^2_{\theta_{0}}(X_1)\right)  u^*_{l_{\theta}}(Y_1)\right] &=& \E\left[\left(b^2_{\theta_{0}}(X_1)+
\sigma^2_{\theta_{0}}(X_1)\right) \int e^{iY_1y} u_{l_{\theta}}(z) dz \right]\nonumber\\
&=&\E\left[\left(b^2_{\theta_{0}}(X_1)+
\sigma^2_{\theta_{0}}(X_1)\right) \int \frac{1}{2\pi}\frac{1}{f_{\varepsilon}^*(z)}e^{iY_1z} (l_{\theta}(-z))^*dy  \right]\nonumber\\
&=&\frac{1}{2\pi} \int \E\left[\left(b^2_{\theta_{0}}(X_1)+
\sigma^2_{\theta_{0}}(X_1)\right)e^{i(X_1+\varepsilon_1)z} \right] \frac{1}{f_{\varepsilon}^*(z)} (l_{\theta}(-z))^* dz \nonumber\\
&=&\frac{1}{2\pi} \int \frac{\E\left[ e^{i\varepsilon_1 z}\right]}{f_{\varepsilon}^*(z)} \E\left[\left(b^2_{\theta_{0}}(X_1)+
\sigma^2_{\theta_{0}}(X_1)\right)e^{iX_1 z}\right] (l_{\theta}(-z))^* dy\nonumber\\
&=&\frac{1}{2\pi} \E\left[\left(b^2_{\theta_{0}}(X_1)+
\sigma^2_{\theta_{0}}(X_1)\right) \int e^{iX_1 z} (l_{\theta}(-z))^* dz \right]\nonumber\\
&=&\frac{1}{2\pi} \E\left[ \left(b^2_{\theta_{0}}(X_1)+
\sigma^2_{\theta_{0}}(X_1)\right) \left((l_{\theta}(-X_1))^{*}\right)^*\right]\nonumber\\
&=& \E\left[ \left(b^2_{\theta_{0}}(X_1)+
\sigma^2_{\theta_{0}}(X_1)\right)l_{\theta}(X_1)\right].\nonumber\\
&=&\int \left(b^2_{\theta_{0}}(x)+
\sigma^2_{\theta_{0}}(x)\right)f_{\theta_{0}}(x)\left(b^2_{\theta}(x)+
\sigma^2_{\theta}(x)\right)f_{\theta}(x)dx \nonumber\\
&=&\left\langle l_{\theta}, l_{\theta_{0}} \right\rangle.
\end{eqnarray*}
Then,
\begin{equation*}
\E\left[m_{\theta}(\Y_1)\right]=\left\|l_{\theta}\right\|^{2}-2\left\langle
l_{\theta}, l_{\theta_{0}} \right\rangle =
\left\|l_{\theta}-l_{\theta_0}\right\|^{2}-\left\|l_{\theta_{0}}\right\|^{2}.
\end{equation*}

\subsection{Proofs}\label{appendice3}

For the reader convenience we split the proof of Theorems \ref{consistency_theorem} and \ref{CLT} into three parts: in Subsection \ref{EoE}, we give the proof of the existence of our contrast estimator defined in (\ref{min}). In Subsection \ref{CoE}, we prove the consistency, that is, the Theorem \ref{consistency_theorem}. Then, we prove the asymptotic normality of our estimator in  Subsection \ref{ANoE}, that is, the Theorem \ref{CLT}.  The Subsection \ref{PC} is devoted to  Corollary \ref{lele}. \\

Recall from Remark \ref{remark:varphi} that we only made the proof for the function $\varphi$ defined by \eqref{varphicas2} in Definition \ref{contrast_function} and we refer to \cite{MR3118606} for the proof in the case \eqref{varphicas} of Definition \ref{contrast_function}. 

\subsubsection{ Proof of the existence and measurability of the M-Estimator}\label{EoE}

By assumption, the function $\theta \mapsto \left\|l_{\theta}\right\|_{2}^2$ is continuous. Moreover, $l^{*}_{\theta}$ and then $u^{*}_{l_{\theta}}(x)=\frac{1}{2\pi}\int e^{ixy}\frac{l^{*}_{\theta}(-y)}{f^{*}_{\varepsilon}(y)}dy$ are continuous w.r.t $\theta$. In particular, the function $m_{\theta}(\y_{i})=\left\|l_{\theta}\right\|_{2}^2-2\varphi(y_{i+1})u^{*}_{l_{\theta}}(y_{i})$ is continuous w.r.t $\theta$, for $\varphi: x \mapsto x^2-s_{\varepsilon}^2$. Hence, the function $\mathbf{P}_{n}m_{\theta}=\frac{1}{n}\sum_{i=1}^{n}m_{\theta}(\Y_i)$ is continuous w.r.t. $\theta$ belonging to the compact subset $\Theta$. So, there exists $\tilde{\theta}$ that belongs to $\Theta$ such that: 
 \begin{equation*}
\inf_{\theta \in \Theta}\mathbf{P}_{n}m_{\theta}=\mathbf{P}_{n}m_{\tilde{\theta}}.\qed
 \end{equation*}
 
\subsubsection{ Proof of the Consistency}\label{CoE}

For the consistency of our estimator, we need to use the uniform convergence given in the following Lemma. Let us consider the following quantities:

\begin{equation*}
\mathbf{P}_{n}h_{\theta}=\frac{1}{n}\sum_{i=1}^{n}h_{\theta}(Y_i);\quad \mathbf{P}_{n}S_{\theta}=\frac{1}{n}\sum_{i=1}^{n}\nabla_{\theta}h_{\theta}(Y_i) \text{ and }  \mathbf{P}_{n}H_{\theta}=\frac{1}{n}\sum_{i=1}^{n}\nabla^{2}_{\theta}h_{\theta}(Y_i)
\end{equation*}

where $h_{\theta}$ is real function from $\Theta \times \mathcal{Y}$ with value in $\R$.\\

\begin{lemma}{Uniform Law of Large Numbers (\textbf{ULLN})(see \cite{newey} for the proof.)}\label{ULLN}\qquad\\

Let $(Y_i)$ be an ergodic stationary process and suppose that:
\begin{enumerate}
\item  For all $y \in \R$, $\theta \mapsto h_{\theta}(y)$ is continuous and for all $\theta \in \Theta$, $y \mapsto h_{\theta}(y)$ is measurable.
\item   There exists a function (called the dominating function) $s:\R\to \R$ such that for all $\theta \in \Theta$ $\left|h_{\theta}(y)\right|\leq s(y)$ and $\E[s(Y_1)]<\infty$. Then:
\end{enumerate}

\begin{equation*}
\sup_{\theta \in \Theta}\left|\mathbf{P}_{n}h_{\theta}-\mathbf{P}h_{\theta}\right|\rightarrow 0 \qquad \text{ in probability as n } \rightarrow  \infty. 
\end{equation*}

Moreover, $\mathbf{P}h_{\theta}$ is a continuous function of $\theta$.\\
\end{lemma}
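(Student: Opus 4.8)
The plan is to prove the Uniform Law of Large Numbers (Lemma \ref{ULLN}) by the classical compactness-plus-continuity argument, following \cite{newey}. First I would note that the ergodic theorem gives, for each \emph{fixed} $\theta \in \Theta$, the pointwise convergence $\mathbf{P}_n h_\theta \to \mathbf{P} h_\theta$ almost surely (hence in probability), where $\mathbf{P} h_\theta = \E[h_\theta(Y_1)]$ is finite because $|h_\theta(Y_1)| \leq s(Y_1)$ and $\E[s(Y_1)] < \infty$. The whole point is to upgrade this to \emph{uniform} convergence over the compact set $\Theta$.

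The key device is a local oscillation (bracketing) bound. For $\theta \in \Theta$ and $\rho > 0$ set $w(y,\theta,\rho) = \sup_{\theta' \in \Theta,\, \|\theta'-\theta\| \leq \rho} |h_{\theta'}(y) - h_\theta(y)|$. Since $\theta' \mapsto h_{\theta'}(y)$ is continuous for each $y$, we have $w(y,\theta,\rho) \downarrow 0$ as $\rho \downarrow 0$; moreover $w(y,\theta,\rho) \leq 2 s(y)$, so by dominated convergence $\E[w(Y_1,\theta,\rho)] \downarrow 0$ as $\rho \downarrow 0$. Hence, given $\varepsilon > 0$, every $\theta$ has a radius $\rho_\theta$ with $\E[w(Y_1,\theta,\rho_\theta)] < \varepsilon$. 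The balls $\{B(\theta,\rho_\theta) : \theta \in \Theta\}$ cover $\Theta$; by compactness extract a finite subcover with centers $\theta_1,\dots,\theta_K$. For any $\theta$, pick a center $\theta_k$ with $\|\theta - \theta_k\| \leq \rho_{\theta_k}$ and bound
\begin{equation*}
|\mathbf{P}_n h_\theta - \mathbf{P} h_\theta| \leq |\mathbf{P}_n h_{\theta_k} - \mathbf{P} h_{\theta_k}| + \mathbf{P}_n w(\cdot,\theta_k,\rho_{\theta_k}) + \E[w(Y_1,\theta_k,\rho_{\theta_k})].
\end{equation*}
Taking $\sup_\theta$, the right side is at most $\max_{1\leq k\leq K}|\mathbf{P}_n h_{\theta_k} - \mathbf{P} h_{\theta_k}| + \max_{1\leq k\leq K}|\mathbf{P}_n w(\cdot,\theta_k,\rho_{\theta_k}) - \E[w(Y_1,\theta_k,\rho_{\theta_k})]| + 2\varepsilon$. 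Now the ergodic theorem applies to each of the finitely many functions $h_{\theta_1},\dots,h_{\theta_K}$ and $w(\cdot,\theta_1,\rho_{\theta_1}),\dots,w(\cdot,\theta_K,\rho_{\theta_K})$ (all integrable), so the two maxima tend to $0$ in probability; letting $n \to \infty$ and then $\varepsilon \to 0$ gives $\sup_{\theta \in \Theta}|\mathbf{P}_n h_\theta - \mathbf{P} h_\theta| \to 0$ in probability. Finally, continuity of $\theta \mapsto \mathbf{P} h_\theta$ follows from the same oscillation bound: $|\mathbf{P} h_{\theta'} - \mathbf{P} h_\theta| \leq \E[w(Y_1,\theta,\|\theta'-\theta\|)] \to 0$ as $\theta' \to \theta$, again by dominated convergence.

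The main obstacle — really the only subtle point — is establishing measurability of the oscillation function $y \mapsto w(y,\theta,\rho)$, since it is defined as a supremum over an uncountable index set. This is handled by noting that $\Theta$ is separable, so the supremum over $\theta' \in B(\theta,\rho)$ may be replaced by a supremum over a countable dense subset (using continuity of $\theta' \mapsto h_{\theta'}(y)$), making $w(\cdot,\theta,\rho)$ a countable supremum of measurable functions, hence measurable; the same remark justifies applying the ergodic theorem to it. Everything else is the routine finite-cover bookkeeping above, and one may simply cite \cite{newey} for these details.
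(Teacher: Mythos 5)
Your proof is correct, and it coincides with the standard argument of the reference the paper cites for this lemma (Newey and McFadden): the paper itself gives no proof, deferring entirely to \cite{newey}, and what you have written is precisely that pointwise-ergodic-theorem-plus-oscillation-function-plus-finite-subcover argument, including the right domination by $2s(y)$ and the separability remark needed for measurability of the supremum. Nothing is missing; your write-up simply supplies the details the paper leaves to the citation.
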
 

By assumption for all $x$, $\theta \mapsto l_\theta(x)$ is continuous and for all $\theta$, $x \mapsto l_{\theta}(x)$ is measurable which implies the continuity and the measurability of the function $\mathbf{P}_{n}m_{\theta}$ on the compact subset $\Theta$. Furthermore, the local dominance assumption \textbf{(A8)} implies that $\E\left[\sup_{\theta \in \Theta }\left|m_{\theta}(\Y_i)\right|\right]$ is finite. Indeed,

\begin{eqnarray*}
\left|m_{\theta}(\y_{i})\right|&=&\left|\left\|l_{\theta}\right\|_{2}^2-2\varphi(y_{i+1})u^{*}_{l_{\theta}}(y_{i})\right|\nonumber\\
&\leq& \left\|l_{\theta}\right\|_{2}^2+2\left|\varphi(y_{i+1})u^{*}_{l_{\theta}}(y_{i})\right|.\label{dom}
\end{eqnarray*}
with $\varphi$ the function defined in (2) in Definition \ref{contrast_function}.

As $\left\|l_{\theta}\right\|_{2}^2$ is continuous on the compact subset $\Theta$, $\sup_{\theta \in \Theta }\left\|l_{\theta}\right\|_{2}^2$ is finite. Therefore, $\E\left[\sup_{\theta \in \Theta }\left|m_{\theta}(\Y_i)\right|\right]$ is finite if $\E\left[\sup_{\theta \in \Theta }\left|\varphi(Y_{i+1})u^{*}_{l_{\theta}}(Y_i)\right|\right]$ is finite. Lemma \ref{ULLN} gives us the uniform convergence in probability of the contrast function: for any $\varepsilon>0$,

\begin{equation*}
\lim_{n\rightarrow +\infty} \mathbb{P}\left(\sup_{\theta \in \Theta}\left|\mathbf{P}_{n}m_{\theta}-\mathbf{P}m_{\theta}\right|\leq \varepsilon\right)=1.
\end{equation*}

Combining the uniform convergence with Theorem 2.1 p. 2121 chapter 36 in \cite{robert} yields the weak (convergence in probability) consistency of the estimator.\qed

\subsubsection{Proof of the asymptotic normality}\label{ANoE}

Consider the model (\ref{mod1}) under the assumptions
\textbf{A0}-\textbf{A7}. The proof of the asymptotic normality
results from assumptions \textbf{A8}-\textbf{A11} is a
straighforward application of \cite{Fy00} (Propostion 7.8. p.
472 and \cite{Ga04}). Furthermore, for the CLT, we need to recall some mixing properties (we refer the reader to \cite{Do94} for a complete revue of mixing processes). \\

Let $K_{\theta}(x, dy)$ be a Markov transition kernel on a general space $(\mathcal{X}, \mathcal{B}(\mathcal{X})))$ and let $K^{n}_{\theta}(x, dy)$ denotes the $n$ step Markov transition corresponding to $K_{\theta}$. Then, for $k \in \mathbb{N}, x \in \mathcal{X}$ and a measurable set $\mathcal{A}$:

\begin{equation*}
K^{n}_{\theta}(x, dy)=\mathbb{P}_{\theta}(X_{n+k} \in \mathcal{A} \vert X_k=x)
\end{equation*}

Let $M:\R \to \R$ be a nonnegative function and $\gamma : \mathbb{N} \to \mathbb{Z}_{+}$ be a nonnegative decreasing function such that:

\begin{equation*}
\mathbf{(C)} \qquad \vert\vert K^{n}_{\theta}(x, .)-f_{\theta}(.)\vert \vert_{VT} \leq M(x) \gamma(n)
\end{equation*}
where $\vert\vert.\vert\vert_{VT}$ denotes the total variation norm.\\

\begin{remark}
$X$ is geometrically ergodic if $\mathbf{(C)}$ holds with $\gamma(n)=t^{n}$ for some $t <1$.
$X$ is uniform ergodic  if $\mathbf{(C)}$ holds with $M$ bounded and $\gamma(n)=t^{n}$ for some $t <1$.  
$X$ is polynomial ergodic of order $m$ where $m\geq 0$ if $\mathbf{(C)}$ holds with $\gamma(n)=n^{-m}$.
\end{remark}

The proof of the asymptotic normality is based on the following Lemma: 

\begin{lemma}\label{F}[see \cite{Fy00} and \cite{Ga04} for the proof.]

Suppose that the conditions of the consistency hold. Suppose further that:

\begin{enumerate}
\item $\Y_i$ is $\alpha$-mixing.
\item (Moment condition): for some $\delta >0$ and for each $j\in \left\{1,\cdots ,p\right\}:$
$$\E\left[\left|\frac{\p m_{\theta}(\Y_{1})}{\p \theta_j}\right|^{2+\delta}\right]<\infty$$.
\item Assumption $\mathbf{(C)}$ holds such that $\E[M(X_1)]<\infty$ and $\gamma(n) $ satisfies $\sum_{n}^{}\gamma(n)^{\frac{\delta}{2+\delta}}<\infty$ with $\delta$ defined in (2).
\item (Hessian Local condition): for some neighbourhood $\mathcal{U}$ of $\theta_0$ and for $j,k\in \left\{1,\cdots, p\right\}$ 
$$\E\left[\sup_{\theta \in \mathcal{U} }\left|\frac{\p^2m_{\theta}(\Y_{1})}{\p \theta_j \p \theta_k}\right|\right]< \infty.$$
\end{enumerate}

Then, $\widehat{\theta}_{n}$ defined in Eq.(\ref{min}) is asymptotically normal with asymptotic covariance matrix given by:
\begin{equation*}
\Sigma(\theta_{0})=V_{\theta_0}^{-1} \Omega(\theta_{0})V_{\theta_0}^{-1}
\end{equation*}
where $V_{\theta_0}$ is the Hessian of the application $\mathbf{P}m_{\theta}$ given in Eq.(\ref{cont}).
\end{lemma}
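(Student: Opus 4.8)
The plan is to run the classical Taylor-expansion argument for $\sqrt n$-consistent M-estimators, replacing the i.i.d.\ central limit theorem by a CLT for $\alpha$-mixing sequences. First I would use the consistency proved in Theorem \ref{consistency_theorem} together with \textbf{A0} (which puts $\theta_0$ in the interior $\Theta_0$) to conclude that $\widehat\theta_n\in\Theta_0$ on an event of probability tending to one, so that the first-order condition $\mathbf{P}_n\nabla_\theta m_{\widehat\theta_n}=0$ holds there. At the same time I would record the two regularity facts used throughout: under \textbf{A5}--\textbf{A6} one may differentiate under the empirical mean, so that $\nabla_\theta\mathbf{P}_nm_\theta=\mathbf{P}_n\nabla_\theta m_\theta$ and $\nabla^2_\theta\mathbf{P}_nm_\theta=\mathbf{P}_n\nabla^2_\theta m_\theta$; and, from the explicit computation of the limit contrast in Appendix \ref{appendicecontrast51}, $\mathbf{P}m_\theta=\|l_\theta-l_{\theta_0}\|^2-\|l_{\theta_0}\|^2$, whence $\E[\nabla_\theta m_{\theta_0}(\Y_1)]=\nabla_\theta\mathbf{P}m_{\theta_0}=0$ and $\nabla^2_\theta\mathbf{P}m_{\theta_0}=V_{\theta_0}$, the matrix appearing in Corollary \ref{lele}, non-singular by \textbf{A7}.

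A mean-value expansion of the score around $\theta_0$ gives, for some $\bar\theta_n$ on the segment joining $\widehat\theta_n$ to $\theta_0$,
\[ 0=\sqrt n\,\mathbf{P}_n\nabla_\theta m_{\theta_0}+\bigl(\mathbf{P}_n\nabla^2_\theta m_{\bar\theta_n}\bigr)\,\sqrt n\,(\widehat\theta_n-\theta_0), \]
so it suffices to treat the two factors on the right. For the Hessian factor I would apply the Uniform Law of Large Numbers (Lemma \ref{ULLN}) to $\theta\mapsto\nabla^2_\theta m_\theta$ on a neighbourhood $\mathcal U$ of $\theta_0$, the dominating function being supplied by hypothesis (4) (the Hessian local dominance \textbf{A10}); combined with $\bar\theta_n\to\theta_0$ and the continuity of $\theta\mapsto\nabla^2_\theta\mathbf{P}m_\theta$, this yields $\mathbf{P}_n\nabla^2_\theta m_{\bar\theta_n}\overset{P}{\to}V_{\theta_0}$. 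For the score factor, which is a centred functional of the stationary ergodic process $(\Y_i)$, I would invoke a CLT for stationary $\alpha$-mixing sequences (the version in \cite{Ga04}, cf.\ also \cite{Fy00}): hypothesis (1) provides the mixing, hypothesis (2) the $(2+\delta)$ moment of each coordinate of $\nabla_\theta m_{\theta_0}(\Y_1)$, and hypothesis (3) — $\E[M(X_1)]<\infty$ together with $\sum_n\gamma(n)^{\delta/(2+\delta)}<\infty$ under condition (C) — the decay of the mixing coefficients at the rate required by that CLT. This produces $\sqrt n\,\mathbf{P}_n\nabla_\theta m_{\theta_0}\overset{\mathcal L}{\to}\mathcal N(0,\Omega(\theta_0))$ with $\Omega(\theta_0)=\sum_{j\in\mathbb Z}\mathrm{Cov}\bigl(\nabla_\theta m_{\theta_0}(\Y_1),\nabla_\theta m_{\theta_0}(\Y_{1+j})\bigr)$, and Slutsky's lemma then gives $\sqrt n\,(\widehat\theta_n-\theta_0)\overset{\mathcal L}{\to}\mathcal N\bigl(0,V_{\theta_0}^{-1}\Omega(\theta_0)V_{\theta_0}^{-1}\bigr)$; the series form of $\Omega(\theta_0)$ is precisely what is made explicit in Corollary \ref{lele}.

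The step I expect to be the real obstacle is showing that the ergodicity/drift condition (C) on the Markov kernel $K_\theta$ of $(X_i)$ yields a usable decay rate for the $\alpha$-mixing coefficients of the bivariate observation process $\Y_i=(Y_i,Y_{i+1})$. This requires chaining three reductions: from $(X_i)$ to $(Y_i)$ via \textbf{A4} (adding the i.i.d.\ noise $(\varepsilon_i)$, independent of $(X_i)$, cannot increase mixing coefficients), then from $(Y_i)$ to the two-block process $(\Y_i)$ (whose coefficients are those of $(Y_i)$ up to a one-lag shift), and finally matching the resulting $\gamma(n)$-type bound with the summability $\sum_n\gamma(n)^{\delta/(2+\delta)}<\infty$ while simultaneously retaining the $(2+\delta)$-integrability of the score — the interplay between $\delta$ and the mixing rate being the delicate point. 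The remaining ingredients (interiority of $\widehat\theta_n$, differentiation under the integral, the ULLN step for the Hessian, and identification of the limiting covariance) are routine given \textbf{A5}--\textbf{A10}, Lemma \ref{ULLN}, and the computation in Appendix \ref{appendicecontrast51}.
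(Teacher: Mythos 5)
Your argument is correct in outline, but note that the paper does not actually prove Lemma \ref{F}: the statement is imported from \cite{Fy00} (Proposition 7.8) and \cite{Ga04}, and the proof block that follows it in Appendix \ref{ANoE} only verifies that hypotheses (2) and (4) of the lemma hold under \textbf{A8}--\textbf{A10} in the specific setting of the model. What you have written is a reconstruction of the cited result itself, and it is the standard one: interiority plus consistency to obtain the first-order condition, a mean-value expansion of the score, the ULLN of Lemma \ref{ULLN} with the Hessian dominance to send $\mathbf{P}_n\nabla^2_\theta m_{\bar\theta_n}$ to $V_{\theta_0}$, an Ibragimov-type CLT for strictly stationary $\alpha$-mixing sequences applied to $\sqrt n\,\mathbf{P}_n\nabla_\theta m_{\theta_0}$, and Slutsky; your long-run variance $\sum_{j\in\mathbb Z}\C\bigl(\nabla_\theta m_{\theta_0}(\Y_1),\nabla_\theta m_{\theta_0}(\Y_{1+j})\bigr)$ is exactly the $\Omega_0(\theta_0)+2\sum_{j\geq 2}\Omega_{j-1}(\theta_0)$ made explicit in Corollary \ref{lele}. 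You also correctly isolate the one genuinely non-routine step, namely converting condition $\mathbf{(C)}$ into a summable mixing rate for $\Y_i=(Y_i,Y_{i+1})$: condition $\mathbf{(C)}$ with $\E[M(X_1)]<\infty$ bounds the $\beta$-mixing (hence $\alpha$-mixing) coefficients of the stationary chain $(X_i)$ by $\E[M(X_1)]\gamma(n)$, independence of the i.i.d.\ noise and the one-lag blocking give $\alpha_{\Y}(n)\leq\alpha_X(n-1)$, and hypothesis (3) then delivers $\sum_n\alpha_{\Y}(n)^{\delta/(2+\delta)}<\infty$, which is precisely what the mixing CLT requires alongside the $(2+\delta)$ moment of hypothesis (2). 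The only detail worth adding is that the mean-value theorem for the vector-valued score must be applied row by row, so the intermediate point $\bar\theta_n$ differs across coordinates; this is harmless because your ULLN step is uniform over the neighbourhood $\mathcal U$. With that caveat, your proposal is a complete and correct derivation of the lemma that the paper takes on faith from the references.
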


\begin{proof}

It just remains to check that the conditions (2) and (4) of Lemma \ref{F} hold under our assumptions.\\

\noindent \paragraph{\small{Moment condition}:}
as the function $l_{\theta}$ is twice continuously differentiable w.r.t $\theta$, for all $\y_{i}$ $\in$ $\R^2$, the application $m_{\theta}(\y_{i}):\ \theta \in \Theta \mapsto m_{\theta}(\y_{i})=||l_{\theta}||_{2}^2 - 2\varphi(y_{i+1})u^*_{l_{\theta}}(y_{i})$ is twice continuously differentiable for all $\theta$ $\in$ $\Theta$ and its first derivatives are given by:

\begin{equation*}
\nabla_{\theta}m_{\theta}(\y_{i})= \nabla_{\theta}||l_{\theta}||_{2}^2 - 2\varphi(y_{i+1})\nabla_{\theta}u^*_{l_{\theta}}(y_{i}).
\end{equation*}

 By assumption, for each $j\in \left\{1,\cdots,p\right\}$, $\frac{\p l_{\theta}}{\p \theta_j} \in \mathbb{L}_{1}(\R)$, therefore one can apply the Lebesgue differentiation Theorem and Fubini's Theorem to obtain :

\begin{equation}
\nabla_{\theta}m_{\theta}(\y_{i})
=\left[\nabla_{\theta}||l_{\theta}||_{2}^2 - 2\varphi(y_{i+1})u^*_{\nabla_{\theta}l_{\theta}}(y_{i}) \right]\label{expres}.
\end{equation}

Then, for some $\delta >0$:

\begin{eqnarray}
\left| \nabla_{\theta}m_{\theta}(\y_{i})\right|^{2+\delta}&=&\left|\nabla_{\theta}||l_{\theta}||_{2}^2 - 2\varphi(y_{i+1})u^*_{\nabla_{\theta}l_{\theta}}(y_{i}) \right|^{2+\delta}\nonumber\\
&\leq&  C_{1}\left|\nabla_{\theta}||l_{\theta}||_{2}^2\right|^{2+\delta}+C_{2}\left|\varphi(y_{i+1})u^*_{\nabla_{\theta}l_{\theta}}(y_{i}) \right|^{2+\delta},\label{ca}
\end{eqnarray}
where $C_{1}$ and $C_{2}$ are two positive constants. By assumption, the function $||l_{\theta}||_{2}^2$ is twice continuously differentiable w.r.t $\theta$. Hence, $\theta \mapsto \nabla_{\theta}||l_{\theta}||_{2}^2$ is continuous on the compact subset $\Theta$ and the first term of equation (\ref{ca}) is finite. The second term is finite by the moment assumption \textbf{(A9)}.\\

\noindent \paragraph{\small{Hessian Local dominance}:} for $j,k \in\left\{1,\cdots,p\right\}$, $\frac{\p^2 l_{\theta}}{\p\theta_j \p\theta_k} \in \mathbb{L}_{1}(\R)$, the Lebesgue differentiation Theorem gives:

\begin{equation*}
\nabla^{2}_{\theta}m_{\theta}(\y_{i})=\nabla^{2}_{\theta}||l_{\theta}||_{2}^2 - 2\varphi(y_{i+1})u^*_{\nabla^{2}_{\theta}l_{\theta}}(y_{i}),
\end{equation*}
and, for some neighbourhood $\mathcal{U}$ of $\theta_0$:

\begin{equation*}
\E\left[\sup_{\theta \in \mathcal{U} }\left\|\nabla_{\theta}^{2}m_{\theta}(\Y_i)\right\|\right]\leq \sup_{\theta \in \mathcal{U}}\left\|\nabla^{2}_{\theta}||l_{\theta}||_{2}^2\right\|+2\E\left[\sup_{\theta \in \mathcal{U}}\left\|\varphi(Y_{i+1})u^{*}_{\nabla_{\theta}^{2}l_{\theta}}(Y_i)\right\|\right].
\end{equation*} The first term of the above equation is finite by continuity and compactness argument and the second term is finite by the Hessian local dominance assumption \textbf{(A10)}. 

\end{proof}

\subsubsection{Proof of Corollary \ref{lele}}\label{PC}
By replacing $\nabla_{\theta}m_{\theta}(\Y_{1})$ by its expression (\ref{expres}), we have:

\begin{eqnarray*}
\Omega_{0}(\theta)&=&\Var\left[\nabla_{\theta}||l_{\theta}||_{2}^2 - 2\varphi(Y_{2})u^*_{\nabla_{\theta}l_{\theta}}(Y_{1}) \right]\\
&=&4 \Var\left[\varphi(Y_{2})u^*_{\nabla_{\theta}l_{\theta}}(Y_{1}) \right]\\
&=&4\left[\E \left[\varphi(Y_{2})^2 \left(u^*_{\nabla_{\theta}l_{\theta}}(Y_{1}) \right)\left(u^*_{\nabla_{\theta}l_{\theta}}(Y_{1}) \right)'\right]-\E \left[\varphi(Y_{2}) u^*_{\nabla_{\theta}l_{\theta}}(Y_{1})\right]\E \left[\varphi(Y_{2}) u^*_{\nabla_{\theta}l_{\theta}}(Y_{1})\right]'\right].
\end{eqnarray*}
Furthermore, by Eq.(\ref{mod1}) and by independence of the centered noise $(\varepsilon_2)$ and $(\eta_2)$, we have:

\begin{equation*}
\E\left[ \varphi(Y_{2}) u^*_{\nabla_{\theta}l_{\theta}}(Y_{1})\right] = \E\left[ (b^2_{\theta_{0}}+\sigma^2_{\theta_{0}})(X_1)  u^*_{\nabla_{\theta}l_{\theta}}(Y_{1})\right].
\end{equation*}

Using Fubini's Theorem and Eq.(\ref{mod1}) we obtain:

\begin{eqnarray}\label{fub}
\E\left[ (b^2_{\theta_{0}}+\sigma^2_{\theta_{0}})(X_1) u^*_{\nabla_{\theta}l_{\theta}}(Y_{1})\right] &=& \E\left[(b^2_{\theta_{0}}+\sigma^2_{\theta_{0}})(X_1) \int e^{iY_{1}z} u_{\nabla_{\theta}l_{\theta}}(z) dz  \right]\nonumber\\
&=&\E\left[(b^2_{\theta_{0}}+\sigma^2_{\theta_{0}})(X_1) \int \frac{1}{2\pi}\frac{1}{f_{\varepsilon}^*(z)}e^{iY_{1}z} (\nabla_{\theta}l_{\theta})^*(-z)dz \right]\nonumber\\
&=&\frac{1}{2\pi} \int \E\left[(b^2_{\theta_{0}}+\sigma^2_{\theta_{0}})(X_1)e^{i(X_1+\varepsilon_1)z} \right] \frac{1}{f_{\varepsilon}^*(z)} (\nabla_{\theta}l_{\theta})^*(-z) dz \nonumber\\
&=&\frac{1}{2\pi} \int \frac{\E\left[ e^{i\varepsilon_1z}\right]}{f_{\varepsilon}^*(z)} \E\left[(b^2_{\theta_{0}}+\sigma^2_{\theta_{0}})(X_1)e^{iX_1z}\right] (\nabla_{\theta}l_{\theta})^*(-z) dz\nonumber,
\end{eqnarray}
so that 
\begin{eqnarray}\label{fub}
\E\left[ (b^2_{\theta_{0}}+\sigma^2_{\theta_{0}})(X_1) u^*_{\nabla_{\theta}l_{\theta}}(Y_{1})\right] &=&\frac{1}{2\pi} \int \E\left[(b^2_{\theta_{0}}+\sigma^2_{\theta_{0}})(X_1)e^{i(X_1+\varepsilon_1)z} \right] \frac{1}{f_{\varepsilon}^*(z)} (\nabla_{\theta}l_{\theta})^*(-z) dz \nonumber\\
&=&\frac{1}{2\pi} \int \frac{\E\left[ e^{i\varepsilon_1z}\right]}{f_{\varepsilon}^*(z)} \E\left[(b^2_{\theta_{0}}+\sigma^2_{\theta_{0}})(X_1)e^{iX_1z}\right] (\nabla_{\theta}l_{\theta})^*(-z) dz\nonumber\\
&=&\frac{1}{2\pi} \E\left[ (b^2_{\theta_{0}}+\sigma^2_{\theta_{0}})(X_1) \int e^{iX_1z} (\nabla_{\theta}l_{\theta})^*(-z) dz \right]\nonumber\\
&=&\frac{1}{2\pi} \E\left[ (b^2_{\theta_{0}}+\sigma^2_{\theta_{0}})(X_1) \left((\nabla_{\theta}l_{\theta})^*(-X_1)\right)^*\right]\nonumber\\
&=& \E\left[ (b^2_{\theta_{0}}+\sigma^2_{\theta_{0}})(X_1) \nabla_{\theta}l_{\theta}(X_1)\right].
\end{eqnarray}

Hence,
\begin{equation*}
\Omega_{0}(\theta)=4\left(P_{2}-P_{1}\right),
\end{equation*} 
where
\begin{eqnarray*}
&&P_{1}=\E\left[ (b^2_{\theta_{0}}+\sigma^2_{\theta_{0}})(X_1) \nabla_{\theta}l_{\theta}(X_1)\right]\E\left[(b^2_{\theta_{0}}+\sigma^2_{\theta_{0}})(X_1)\nabla_{\theta}l_{\theta}(X_1)\right]',\\
&&P_2=\E \left[\varphi(Y_{2})^2 \left(u^*_{\nabla_{\theta}l_{\theta}}(Y_{1}) \right)\left(u^*_{\nabla_{\theta}l_{\theta}}(Y_{1}) \right)'\right].
\end{eqnarray*}

\emph{Calculus of the covariance matrix of Corollary (\ref{lele}):} By replacing $(\nabla_{\theta}m_{\theta}(Y_{1}))$ by its expression (\ref{expres}) we have:

\begin{eqnarray*}
\Omega_{j-1}(\theta)
&=&\C\left(\nabla_{\theta}||l_{\theta}||_{2}^2 - 2\varphi(Y_{2})u^*_{\nabla_{\theta}l_{\theta}}(Y_{1}) , \nabla_{\theta}||l_{\theta}||_{2}^2 - 2\varphi(Y_{j+1})u^*_{\nabla_{\theta}l_{\theta}}(Y_{j}) \right),\\
&=&4\C\left(\varphi(Y_{2})u^*_{\nabla_{\theta}l_{\theta}}(Y_{1}) , \varphi(Y_{j+1})u^*_{\nabla_{\theta}l_{\theta}}(Y_{j})\right),\\
&=&4\left[\E\left(\varphi(Y_{2})u^*_{\nabla_{\theta}l_{\theta}}(Y_{1})\varphi(Y_{j+1})u^*_{\nabla_{\theta}l_{\theta}}(Y_{j})\right)-\E\left(\varphi(Y_{2})u^*_{\nabla_{\theta}l_{\theta}}(Y_{1})\right)\E\left(\varphi(Y_{j+1})u^*_{\nabla_{\theta}l_{\theta}}(Y_{j})\right)'\right].
\end{eqnarray*}

By using Eq.(\ref{fub}) and the stationary property of the $Y_i$, one can replace the second term of the above equation by: 

\begin{equation*}
\E\left[(b^2_{\theta_{0}}+\sigma^2_{\theta_{0}})(X_1)\nabla_{\theta}l_{\theta}(X_1)\right]\E\left[(b^2_{\theta_{0}}+\sigma^2_{\theta_{0}})(X_1)\nabla_{\theta}l_{\theta}(X_1)\right]'.
\end{equation*}

Furthermore, by using Eq.(\ref{mod1}) we obtain:

\begin{eqnarray}
\E\left[\varphi(Y_{2})\varphi(Y_{j+1})u^{*}_{\nabla_{\theta}l_{\theta}}(Y_{1}) u^{*}_{\nabla_{\theta}l_{\theta}}(Y_{j})\right]
&=&\E\left[(b^2_{\theta_{0}}+\sigma^2_{\theta_{0}})(X_1)(b^2_{\theta_{0}}+\sigma^2_{\theta_{0}})(X_j)u^{*}_{\nabla_{\theta}l_{\theta}} (Y_{1})u^{*}_{\nabla_{\theta}l_{\theta}}(Y_{j})\right]\nonumber\\
&+&\E\left[(b^2_{\theta_{0}}+\sigma^2_{\theta_{0}})(X_1)\left(\eta_{j+1}+\varepsilon_{j+1}\right)u^{*}_{\nabla_{\theta}l_{\theta}}(Y_{1})u^{*}_{\nabla_{\theta}l_{\theta}}(Y_{j})\right]\label{1}\\
&+&\E\left[(b^2_{\theta_{0}}+\sigma^2_{\theta_{0}})(X_j)\left(\eta_2+\varepsilon_2\right)u^{*}_{\nabla_{\theta}l_{\theta}}(Y_{1})u^{*}_{\nabla_{\theta}l_{\theta}}(Y_{j})\right]\label{2}\\
&+&\E\left[\left(\eta_2+\varepsilon_2\right)\left(\eta_{j+1}+\varepsilon_{j+1}\right)u^{*}_{\nabla_{\theta}l_{\theta}}(Y_{1})u^{*}_{\nabla_{\theta}l_{\theta}}(Y_{j})\right]\label{3}.
\end{eqnarray}

$\newline$
By independence of the centered noise, the term (\ref{1}), (\ref{2}) and (\ref{3}) are equal to zero. Now, if we use Fubini's Theorem we have: 

\begin{equation}
\E\left[(b^2_{\theta_{0}}+\sigma^2_{\theta_{0}})(X_1)(b^2_{\theta_{0}}+\sigma^2_{\theta_{0}})(X_j)u^{*}_{\nabla_{\theta}l_{\theta}}(Y_{1})u^{*}_{\nabla_{\theta}l_{\theta}}(Y_{j})\right]=\E\left[(b^2_{\theta_{0}}+\sigma^2_{\theta_{0}})(X_1) (b^2_{\theta_{0}}+\sigma^2_{\theta_{0}})(X_j)\nabla_{\theta}l_{\theta}(X_1) \nabla_{\theta}l_{\theta}(X_j)\right].
\end{equation}

Hence, the covariance matrix is given by:
 
\begin{eqnarray*}
\Omega_{j-1}(\theta)&=&4\Bigg(\E\left[(b^2_{\theta_{0}}+\sigma^2_{\theta_{0}})(X_1)(b^2_{\theta_{0}}+\sigma^2_{\theta_{0}})(X_j)\left(\nabla_{\theta}l_{\theta}(X_1)\right)\left(\nabla_{\theta}l_{\theta}(X_j)\right)'\right]\\
&&\quad -\E\left[(b^2_{\theta_{0}}+\sigma^2_{\theta_{0}})(X_1)\left(\nabla_{\theta}l_{\theta}(X_1)\right)\right]\E\left[(b^2_{\theta_{0}}+\sigma^2_{\theta_{0}})(X_1)\left(\nabla_{\theta}l_{\theta}(X_1)\right)\right]'\Bigg)\\
&=&4\left(\tilde{C}_{j-1}-\E\left[(b^2_{\theta_{0}}+\sigma^2_{\theta_{0}})(X_1)\left(\nabla_{\theta}l_{\theta}(X_1)\right)\right]\E\left[(b^2_{\theta_{0}}+\sigma^2_{\theta_{0}})(X_1)
\left(\nabla_{\theta}l_{\theta}(X_1)\right)\right]'\right)\\
&=&4\left(\tilde{C}_{j-1}-P_{1}\right).
\end{eqnarray*}
Finally, we obtain: $\Omega(\theta)=\Omega_{0}(\theta)+2\sum_{j >1}^{\infty}\Omega_{j-1}(\theta)$ with $\Omega_{0}(\theta)=4\left(P_2-P_{1}\right)$ and $\Omega_{j-1}(\theta)=4\left(\tilde{C}_{j-1}-P_{1}\right)$.

$\newline$
\emph{Expression of the Hessian matrix $V_{\theta}$ :} We have:

\begin{equation}
\mathbf{P}m_{\theta} = ||l_{\theta}||_{2}^2 - 2\left\langle l_{\theta}, l_{\theta_0}\right\rangle. 
\end{equation}

For all $\theta$ in $\Theta$, the application $\theta \mapsto \mathbf{P}m_{\theta}$ is twice differentiable w.r.t $\theta$ on the compact subset $\Theta$. And for $j\in \left\{1,\cdots,p\right\}$:

\begin{eqnarray*}
\frac{\p \mathbf{P}m}{\p \theta_j}(\theta)&=& 2 \left\langle \frac{\p l_{\theta}}{\p \theta_j}, l_{\theta}\right\rangle-2 \left\langle  \frac{\p l_{\theta}}{\p \theta_j}, l_{\theta_0}\right\rangle\\
&=&2 \left\langle \frac{\p l_{\theta}}{\p \theta_j}, l_{\theta}-l_{\theta_0}\right\rangle,\\
&=&0 \text{ at the point } \theta_{0},
\end{eqnarray*}
and for $j,k \in \left\{1,\cdots,p\right\}$:

\begin{eqnarray*}
\frac{\p^2 \mathbf{P}m}{\p\theta_j \p\theta_k}(\theta) &=& 2 \left(\left\langle \frac{\p^2 l_{\theta}}{\p \theta_j \theta_k}, l_{\theta}- l_{\theta_0}\right\rangle+\left\langle \frac{\p l_{\theta}}{\p  \theta_k}, \frac{\p l_{\theta}}{\p \theta_j}\right\rangle\right)_{j,k}\\
&=& 2 \left(\left\langle \frac{\p l_{\theta}}{\p  \theta_k}, \frac{\p l_{\theta}}{\p \theta_j}\right\rangle\right)_{j,k}\text{ at the point } \theta_{0}. \qquad \Box
\end{eqnarray*}

\subsection{M-estimator using the example in Section \ref{resultats}}\label{appendice2}

\paragraph{\small{Expression of $f_{\varepsilon}^{*}$}:}

\noindent consider the random variable $\overline{\varepsilon}=\frac{\varepsilon-C}{\sqrt{V}}$ with $\varepsilon=\log(X^2)$ where $X$ is standard Gaussian random variable, $C=\E[\log(X^2)]$ and $V=\V[\log(X^2)]$. The Fourier transform of $\overline{\varepsilon}$ is given by:
\begin{eqnarray*}
\E\left[\exp\left(i\overline{\varepsilon} y\right)\right]&=&\exp\left(-\frac{iC}{\sqrt{V}}y\right)\E\left[\exp\left(i\varepsilon y\right)\right]\\
&=&\exp\left(-\frac{iC}{\sqrt{V}}y\right)\E\left[X^{\frac{2iy}{\sqrt{V}}}\right]\\
&=&\exp\left(-\frac{iC}{\sqrt{V}}y\right)\frac{1}{\sqrt{2\pi}}\int_{-\infty}^{+\infty} x^{\frac{2iy}{\sqrt{V}}}\exp\left(-\frac{x^2}{2}\right)dx
\end{eqnarray*}
Using a change of  variable $z=\frac{x^2}{2}$, we get:
\begin{eqnarray*}
\E\left[\exp\left(i\overline{\varepsilon} y\right)\right]&=&\exp\left(-\frac{iC}{\sqrt{V}}y\right)\frac{2^{\frac{iy}{\sqrt{V}}}}{\sqrt{\pi}}\int_{0}^{+\infty} z^{\frac{iy}{\sqrt{V}}-\frac{1}{2}}e^{-z}dz\\
& \equiv &\exp\left(-\frac{iC}{\sqrt{V}}y\right)\frac{2^{\frac{iy}{\sqrt{V}}}}{\sqrt{\pi}}\Gamma\left(\frac{1}{2}+\frac{iy}{\sqrt{V}}\right).
\end{eqnarray*}
Then
\begin{eqnarray*}
f_{\overline{\varepsilon}}^{*}&=&\exp\left(-\frac{iC}{\sqrt{V}}y\right)\frac{2^{\frac{iy}{\sqrt{V}}}}{\sqrt{\pi}}\int_{0}^{+\infty} z^{\frac{iy}{\sqrt{V}}-\frac{1}{2}}e^{-z}dz\\
& \equiv &\exp\left(-\frac{iC}{\sqrt{V}}y\right)\frac{2^{\frac{iy}{\sqrt{V}}}}{\sqrt{\pi}}\Gamma\left(\frac{1}{2}+\frac{iy}{\sqrt{V}}\right).
\end{eqnarray*}

\paragraph{\small{The CIR process: }}

\noindent taking that $\eta_{i+1} \sim \mathcal{N}(0,1)$ and $\varepsilon_{i}$ has a (log-) Chi-squared probability density function,  if the Feller's condition holds true ($a=\frac{2\kappa\theta}{\sigma^2}\geq 1$) and $c=\frac{2\kappa}{\sigma^2}>0$, then the volatility process $X_i$ is stationary ergodic and $\rho-mixing$. The stationary distribution $f_{\theta}$ is the gamma distribution $\gamma(a,c)$ (see \cite{GvJtLc99}). On the other hand, the functions $b_{\theta}$, $\sigma_{\theta}$ and $l_{\theta}$ are given by:
\begin{equation*}
b_{\theta}(x)=(1-\kappa)x+\kappa\theta \text{ and } \sigma_{\theta}(x)=\sigma \sqrt{x}
\end{equation*}
\begin{eqnarray*}
\qquad \qquad \qquad \qquad \qquad l_{\theta}(x)&=&\left(b^2_{\theta}(x)+\sigma^2_{\theta}(x)\right)\gamma(a,c) \\
&=&\left(\left(1-\kappa\right)^2 x^{2}+2x\left(1-\kappa\right)\kappa\theta+\sigma^2+(\kappa\theta)^2\right)f_{\theta}(x),\\
&=&\left(\alpha_1x^{2}+\alpha_2 x+\alpha_3\right)f_{\theta}(x).
\end{eqnarray*}
where $\theta=(\kappa, \mu, \sigma)$ and $\alpha_1=\left(1-\kappa\right)^2,\ \alpha_2=2\left(1-\kappa\right)\kappa\theta+\sigma^2$, $\ \alpha_3=(\kappa\theta)^2$. \\

Therefore
\begin{eqnarray*}
l^{*}_{\theta}(t)&=&\E[e^{itX}l_{\theta}(X)]\\
&=&\alpha_1\E[X^2 e^{itX}]+\alpha_2\E[X e^{itX}]+\alpha_3\E[e^{itX}] \text{ with } X \sim \Gamma(a,c)\\
&=&-\alpha_1\frac{\p^2 f^{*}_{\theta}}{\p t^2}(t)-i\alpha_2\frac{\p f^{*}_{\theta}}{\p t}(t)+\alpha_3 f^{*}_{\theta}(t).
\end{eqnarray*}
After replacing $f^{*}_{\theta}(t)$ by $\left(1-\frac{it}{c}\right)^{-a}$, we obtain:
\begin{equation*}
l_{\theta}^{*}(x)=-\alpha_1\left[\frac{-a}{c^{2}}(a+1)\left(1-\frac{ix}{c}\right)^{-a-2}\right]+i\alpha_2\frac{a}{c}\left(1-\frac{ix}{c}\right)^{-a-1}+\alpha_3\left(1-\frac{ix}{c}\right)^{-a}.
\end{equation*}
It follows that for all $\theta$, the squared norme of $l_\theta$ is given by:
\begin{eqnarray*}
\left\|l_{\theta}\right\|^2&=&\int_{}^{}\left(b_{\theta}^{2}+\sigma^{2}_{\theta}(x)\right)^{2}\Gamma^{2}(a,c)dx\\
&=&\int_{}^{}\left(\beta_1x^4+\beta_2x^3+\beta_3x^2+\beta_4x+\beta_5\right)\Gamma^{2}(a,c)dx,\\
\end{eqnarray*}
 where $\beta_1=\alpha_1^2,\ \beta_2=2\alpha_1\alpha_2,\ \beta_3=2\alpha_1\alpha_3+\alpha_2^2,\ \beta_4=2\alpha_2\alpha_3,\ \beta_3=\alpha_3^2$. Finally, using the non-centered moments of a Gamma-distributed random variable, $\E[X^r]=\frac{\Gamma(a+r)}{\Gamma(a)c^{r}}$, we get:
\begin{eqnarray*}
\beta_1\int_{}^{}x^4\Gamma^{2}(a,c)dx&=&\beta_1\int_{}^{}x^4\frac{c^{2a}}{\Gamma^{2}(a)}e^{-2cx}x^{2a-2}\\
&=&\beta_1 2^{-(2a+3)}c^{-3}\frac{\Gamma(2a+3)}{\Gamma^{2}(a)}\int\frac{(2c)^{2a+3}}{\Gamma(2a+3)}e^{-(2c)x}x^{(2a+3)-1}dx\\
&=&\beta_1 2^{-(2a+3)}c^{-3}\frac{\Gamma(2a+3)}{\Gamma^{2}(a)}\int\Gamma(2a+3,2c)dx\\
&=&\beta_1 2^{-(2a+3)}c^{-3}\frac{\Gamma(2a+3)}{\Gamma^{2}(a)}
\end{eqnarray*}
and
\begin{eqnarray*}
&&\beta_2\int_{}^{}x^3\Gamma^{2}(a,c)dx = \beta_2 2^{-(2a+2)}c^{-2}\frac{\Gamma(2a+2)}{\Gamma^{2}(a)}\int\Gamma(2a+2,2c)dx =\beta_2 2^{-(2a+2)}c^{-2}\frac{\Gamma(2a+2)}{\Gamma^{2}(a)},\\
&&\beta_3\int_{}^{}x^2\Gamma^{2}(a,c)dx = \beta_3 2^{-(2a+1)}c^{-1}\frac{\Gamma(2a+1)}{\Gamma^{2}(a)}\int\Gamma(2a+1,2c)dx =\beta_3 2^{-(2a+1)}c^{-1}\frac{\Gamma(2a+1)}{\Gamma^{2}(a)},\\
&&\beta_4\int_{}^{}x\Gamma^{2}(a,c)dx =\beta_4 2^{-(2a)}\frac{\Gamma(2a)}{\Gamma^{2}(a)},\\
&&\beta_5\int_{}^{}\Gamma^{2}(a,c)dx=\beta_5 2^{-2a+1}c\frac{\Gamma(2a-1)}{\Gamma^{2}(a)}.
\end{eqnarray*}
and the expression of the contrast function (\ref{contraste_application3}) is obtained. It is worth noting that the function $u^{*}_{l_{\theta}}(y)$ defined in Definition \ref{contrast_function} must be approximated numerically by using standard quadrature methods or Fast Fourier Transform.

\subsubsection{Proof of Theorems \ref{consistency_theorem} and \ref{CLT} for the CIR process}\qquad\\

\textbf{Mixing property:} under the Feller's condition, the volatility process $X_t$ is $\rho$-mixing and so $\alpha$-mixing by using the strong Markov property. \\

\textbf{Regularity conditions:} for the CIR process, the function $l_{\theta}$ is given by the following polynomial function $x\mapsto (\alpha_1x^2+\alpha_2x +\alpha_3)f_{\theta}(x)$ with $\alpha_1=\left(1-\kappa\right)^2, \alpha_2=2\left(1-\kappa\right)\kappa\theta+\sigma^2$, $\ \alpha_3=(\kappa\theta)^2$ and for all $x$ this function is smooth w.r.t. $\theta \in \Theta$. Hence, it remains to prove the moment condition and the local dominance to apply Theorem \ref{CLT}.\\

Since the function $l_{\theta}$ is polynomial w.r.t $\theta$ belonging to the compact subset $\Theta$, all the derivatives exist and in particular $\sup_{\theta \in \Theta}l_{\theta}$ and $\sup_{\theta \in \Theta}\nabla^{2}_{\theta}l_{\theta}$ are finite. Furthermore, by combining the compactness argument and as the Fourier transform $f_{\epsilon}^{*}$ satisfies (see \cite{Fan}):

\begin{equation*}
 |f^{*}_{\varepsilon}(x)|=\sqrt{2}\exp\left(-\frac{\pi}{2} |x|\right)\left(1+O\left(\frac{1}{|x|}\right)\right), \quad |x|\rightarrow \infty,             
\end{equation*}
which means that $f_{\varepsilon}$ is ordinary-smooth in its terminology, we obtain:

\begin{equation*}
\left\lbrace\begin{array}{ll}
\E\left(\sup_{\theta \in \Theta}\left\|\varphi(Y_{2})u^{*}_{l_{\theta}}(Y_{1})\right\|\right)<\infty \\
\E\left(\left|\varphi(Y_{2})u^*_{\nabla_{\theta}l_{\theta}}(Y_{1}) \right|^{2+\delta}\right)<\infty \qquad\qquad \text{ for some } \delta >0,\\
\E\left(\sup_{\theta \in \mathcal{U}}\left\|\varphi(Y_{2})u^{*}_{\nabla_{\theta}^{2}l_{\theta}}(Y_{1})\right\|\right)<\infty \qquad \text{ for some neighbourhood } \mathcal{U} \text{ of } \theta_0 .
\end{array}
\right.
\end{equation*}.

\end{document}